\numberwithin{equation}{section}
\def\NZQ{\mathbb}               % the font for N,Z,Q,R,C
\def\QQ{{\NZQ Q}}
\def\ZZ{{\NZQ Z}}
\def\RR{{\NZQ R}}
\def\PP{{\NZQ P}}
\def\ab{\mathbf{a}}
\def\xb{\mathbf{x}}
\def\opn#1#2{\def#1{\operatorname{#2}}} % to make operators
\opn\pd{pd} 
\opn\rk{rk}
\opn\rank{rank}
\opn\depth{depth} 
\opn\grade{grade} 
\opn\height{height}
\opn\embdim{emb\,dim} 
\opn\codim{codim}
\opn\Tr{Tr} 
\opn\bigrank{big\,rank}
\opn\lcm{lcm}
\opn\reg{reg} 
\opn\ini{in} 
\opn\size{size}
\opn\mult{mult}
\opn\dist{dist}
\opn\cone{cone}
\opn\lex{lex}
\opn\rev{rev}
\opn\div{div}
\opn\Div{Div}
\opn\cl{cl}
\opn\Cl{Cl}
\opn\Syz{Syz} \opn\Im{Im} \opn\Ker{Ker} \opn\Coker{Coker}
\opn\Hom{Hom} \opn\Tor{Tor} \opn\Ext{Ext}
\opn\End{End} \opn\Aut{Aut} \opn\id{id} \opn\nat{nat}
\opn\mod{mod} \opn\ord{ord}
\opn\aff{aff} \opn\con{conv} \opn\relint{relint} \opn\st{st}
\opn\lk{lk} \opn\cn{cn} \opn\core{core} \opn\vol{vol}
\opn\link{link} \opn\star{star} \opn\sgn{sgn}
\def\Ac{{\mathcal A}}
\def\Hc{{\mathcal H}}
\def\Fc{{\mathcal F}}
\def\Pc{{\mathcal P}}
\newtheorem{Theorem}{Theorem}[section]
\newtheorem{Lemma}[Theorem]{Lemma}
\newtheorem{Corollary}[Theorem]{Corollary}
\newtheorem{Proposition}[Theorem]{Proposition}
\newtheorem{Remark}[Theorem]{Remark}
\newtheorem{Conjecture}[Theorem]{Conjecture}
\let\epsilon\varepsilon
\let\phi=\varphi
\let\kappa=\varkappa
\def\qed{\ifhmode\textqed\fi
      \ifmmode\ifinner\quad\qedsymbol\else\dispqed\fi\fi}
\def\textqed{\unskip\nobreak\penalty50
       \hskip2em\hbox{}\nobreak\hfil\qedsymbol
       \parfillskip=0pt \finalhyphendemerits=0}
\def\dispqed{\rlap{\qquad\qedsymbol}}
\newcommand{\set}[1]{\left\{ #1 \right\}}  % set
\newcommand{\with}{\ \vrule\ }  % 'with'-Symbol in sets
\newcommand{\defa}{:=}
\newcommand{\bvec}[1]{b_{#1}}
  \newcommand{\labelformat}[1]{%
    \expandafter\renewcommand\csname p@#1\endcsname[1]%
  }
  \newcommand{\numberlike}[2]{%
     \expandafter\def\csname c@#1\endcsname{\csname c@#2\endcsname}%
  }
\newcommand\tD{\widetilde\Delta}
\newcommand\fp{\mathfrak{p}}
\opn\Spec{Spec}
\opn\supp{supp}
\opn\ch{char}
\newcommand\setb[2]{\{ #1 : #2\} }
\newcommand\floor[1]{\lfloor #1 \rfloor}
\begin{document}
\title{Toric rings arising from cyclic polytopes}

\author{Takayuki Hibi}
\address{Department of Pure and Applied Mathematics, Graduate School
of Information Science and Technology, Osaka University, Toyonaka, Osaka 560-
0043, Japan}%
\email{hibi@math.sci.osaka-u.ac.jp}%
% \thanks{JST CREST}
%
\author{Akihiro Higashitani}
\address{Department of Pure and Applied Mathematics, Graduate School
of Information Science and Technology, Osaka University, Toyonaka, Osaka 560-
0043, Japan}%
\email{a-higashitani@cr.math.sci.osaka-u.ac.jp}%
% \thanks{Supported by JSPS Research Fellowship for Young Scientists} 
%
\author{Lukas Katth\"an}
\address{Fachbereich Mathematik und Informatik, Philipps-Universit\"at Marburg, 
35032 Marburg, Germany 
% \newline
% Department of Pure and Applied Mathematics, Graduate School of Information Science and Technology, 
% Osaka University, Toyonaka, Osaka 560-0043, Japan
}
\email{katthaen@mathematik.uni-marburg.de}%
% \thanks{Supported by DAAD}
%
\author{Ryota Okazaki}
\address{Department of Pure and Applied Mathematics, Graduate School
of Information Science and Technology, Osaka University, Toyonaka, Osaka 560-0043, Japan}%
\email{okazaki@ist.math.sci.osaka-u.ac.jp}%
% \thanks{JST CREST}
\thanks{
{\bf 2010 Mathematics Subject Classification:}
Primary 13H10; Secondary 52B20, 05E40. \\
\hspace{5.3mm}{\bf Key words and phrases:} cyclic polytope, Serre's condition $(R_1)$, 
Cohen--Macaulay, Gorenstein. \\
\hspace{5.3mm}The first and forth authors are supported by the JST CREST 
``Harmony of Gr\"obner Bases and the Modern Industrial Society.'' \\
\hspace{5.3mm}The second author is supported by JSPS Research Fellowship for Young Scientists. \\
\hspace{5.3mm}This research was performed while the third author was staying
at Department of Pure and Applied Mathematics, 
Osaka University, November 2011 -- April 2012, supported by the DAAD
}

\begin{abstract}
Let $d$ and $n$ be positive integers with $n \geq d+1$ and 
$\Pc \subset \RR^d$ an integral cyclic polytope of dimension $d$ with $n$ vertices and 
let $K[\Pc]=K[\ZZ_{\geq 0}\Ac_{\Pc}]$ denote its associated semigroup $K$-algebra, 
where $\Ac_{\Pc}=\set{(1,\alpha) \in \RR^{d+1} : \alpha \in \Pc} \cap \ZZ^{d+1}$ and $K$ is a field. 
In the present paper, we consider the problem 
when $K[\Pc]$ is Cohen--Macaulay by discussing Serre's condition $(R_1)$ and 
we give a complete characterization when $K[\Pc]$ is Gorenstein. 
Moreover, we study the normality of the other semigroup $K$-algebra $K[Q]$ 
arising from an integral cyclic polytope, 
where $Q$ is a semigroup generated only with its vertices. 
\end{abstract}

\maketitle

\section*{Introduction}

Let $d$ and $n$ be positive integers with $n \geq d+1$ and 
$\tau_1, \ldots, \tau_n$ real numbers with $\tau_1 < \cdots < \tau_n$. 
The convex polytope $C_d(\tau_1,\ldots,\tau_n) \subset \RR^d$ which is the convex hull of 
$\set{(\tau_i,\tau_i^2,\ldots,\tau_i^d) \in \ZZ^d : i=1,\ldots,n}$ is called 
a {\em cyclic polytope} of dimension $d$ with $n$ vertices. 
In particular, when $\tau_1, \ldots, \tau_n$ are integers, we call it 
an {\em integral} cyclic polytope.
A cyclic polytope is one of the most significant polytopes,
and is well known to be the polytope giving the upper bound
in The Upper Bound Theorem due to Motzkin and McMullen (cf. \cite{BH,Gru}).
Several studies on cyclic polytopes have been achieved by many researchers,
but there are only a few studies on toric rings of integral cyclic polytopes.
In the previous paper \cite{HHKO}, we discussed the normality of (toric rings of) cyclic polytopes
(see below for the definition of normality of a polytope),
and gave a sufficient condition (\cite[Theorem 2.1]{HHKO}) 
and a necessary one (\cite[Theorem 3.1]{HHKO}) for $\Pc$ to be normal.
The present paper is devoted to the continuation of the study of $\Pc$.
We also study the semigroup $K$-algebra generated only by the vertices of integral cyclic polytopes.

For an integral convex polytope $\Pc \subset \RR^{N}$, that is, a convex polytope
whose vertices are in $\ZZ^N$, define $\Pc^{*} \subset \RR^{N+1}$ to be the convex hull of all points 
$(1, \alpha) \in \RR^{N+1}$ with $\alpha \in \Pc$, and set $\Ac_\Pc = \Pc^{*} \cap \ZZ^{N+1}$.
Note that $\Ac_\Pc$ is just the set of integer points in $\Pc^{*}$. 
Let $\ZZ_{\geq 0}$ denote the set of nonnegative integers and 
$\RR_{\geq 0}$ that of nonnegative real numbers. We say that $\Pc$ is {\em normal} if one has 
$$\ZZ_{\geq 0} \Ac_{\Pc}=\ZZ \Ac_{\Pc} \cap \RR_{\geq 0} \Ac_{\Pc},$$
where $\ZZ_{\geq 0}\Ac_{\Pc}, \ZZ \Ac_{\Pc}, \RR_{\ge 0} \Ac_{\Pc}$ are the set of the linear combinations of $\Ac_\Pc$
(in $\RR^{N+1}$) with the coefficients in $\ZZ_{\ge 0}, \ZZ, \RR_{\ge 0}$, respectively.

Let $K$ be a field $\Pc \subset \RR^d$ an integral cyclic polytope of dimension $d$. 
%We define two semigroup $K$-algebras arising from an integral cyclic polytope $\Pc=C_d(\tau_1,\ldots,\tau_n)$.
We refer the reader to \cite{brunsgubel,BH} for the definition of an affine semigroup
and its affine semigroup $K$-algebra.
Let $\ZZ_{\ge 0}\Ac_\Pc$ be as above,
and set
$$
Q_d(\tau_1,\dots ,\tau_n)=\ZZ_{\geq 0}\set{(1,\tau_i,\tau_i^2,\ldots,\tau_i^d) \in \ZZ^{d+1} : i=1,\ldots,n}.
$$
Both of $\ZZ_{\ge 0}\Ac_\Pc$ and $Q_d(\tau_1,\dots ,\tau_n)$ are affine semigroups contained in $\ZZ^{d+1}$;
The further is generated by the set of integer points in $\Pc^{*}$,
and the latter only by the vertices of $\Pc^{*}$.
For simplicity, set $Q := Q_d(\tau_1,\cdots ,\tau_n)$.
Following usual convention, let $K[\Pc]$ denote the affine semigroup $K$-algebra
of $\ZZ_{\ge 0}\Ac_\Pc$, and let $K[Q]$ be that of $Q$.
The $K$-algebra $K[\Pc]$ is just the $K$-subalgebra of the polynomial ring $K[t_0,t_1,\ldots,t_d]$
such that
$$
K[\Pc] = \bigoplus_{\ab \in \ZZ_{\geq 0} \Ac_{\Pc}} K \cdot t^\ab,
$$
where we set $t^\ab=t_0^{a_0}t_1^{a_1} \cdots t_d^{a_d}$ for $\ab=(a_0,a_1,\ldots,a_d) \in \ZZ_{\geq 0}^{d+1}$.
Note that $K[\Pc]$ is nothing other than the {\em toric ring} of $\Pc$,
and as is well known, $\Pc$ is normal if and only if so is $K[\Pc]$. 

Similarly, $K[Q]$ is the $K$-subalgebra of $K[t_0,t_1,\ldots,t_d]$ with $K[Q] = \bigoplus_{\ab \in Q} K \cdot t^{\ab}$.
It is just the toric ring associated with the following {\em configuration}
$$
\begin{pmatrix}
1        &1        &\cdots  &1 \\
\tau_1   &\tau_2   &\cdots  &\tau_n   \\
\tau_1^2 &\tau_2^2 &\cdots  &\tau_n^2 \\
\vdots   &\vdots   &        &\vdots   \\
\tau_1^d &\tau_2^d &\cdots  &\tau_n^d 
\end{pmatrix}.
$$

\bigskip

%Let, as before, $d$ and $n$ be positive integers with $n \geq d+1$, 
%$\tau_1,\ldots,\tau_n$ integers with $\tau_1 < \cdots < \tau_n$ and $\Pc=C_d(\tau_1,\ldots,\tau_n)$. 
 
In the present paper, 
we will consider the Cohen-Macaulayness and Gorensteinness of $K[\Pc]$ 
(Theorem \ref{CM} and Theorem \ref{Gor}, respectively). 
We prove that $K[\Pc]$ always satisfies Serre's condition $(R_1)$, which implies that 
$K[\Pc]$ is Cohen-Macaulay if and only if it is normal. 
This means that the characterization of the normality of integral cyclic polytopes is also 
that of its Cohen-Macaulayness. 
Moreover, it will turn out that $K[\Pc]$ is Gorenstein if and only if one has 
$d=2$, $n=3$ and $(\tau_2-\tau_1, \tau_3-\tau_2)=(2,1)$ or $(1,2)$, 
which says that there is essentially only one Gorenstein integral cyclic polytope, 
% case where $K[\Pc]$ is Gorenstein is essentially only one. 
see Lemma \ref{equiv}. 
We also discuss the normality of the $K$-algebra $K[Q]$, and 
show that if $d \geq 2$ and $n=d+2$, then $K[Q]$ is not normal (Theorem \ref{thm:n=d+2 with d > 1}). 

\smallskip

The structure of the present paper is as follows. 
After preparing some notation, terminologies and lemmata in Section 1 for our main theorems, 
we show Theorem \ref{CM} in Section 2 by considering 
Serre's condition $(R_1)$ for $K[\Pc]$ (Proposition \ref{R1condition}). 
Moreover, Section 3 is devoted to proving Theorem \ref{Gor}. 
In Section 4, we study $K[Q]$ and prove Theorem \ref{thm:n=d+2 with d > 1}. 

%%%%%%%%%%%%%%%%%%%%%%%%%%%%%%%%%%%%%%%%%%%%%%%%%%%

\section{Preliminaries}

In this section, we prepare notation and lemmata for our main theorems. 
Most of them are refered from \cite[Section 1]{HHKO}.

\smallskip

First of all, we will review some fundamental facts on cyclic polytopes.
Let $d$ and $n$ be positive integers with $n \geq d+1$. 
Throughout the present paper,  it is convenient and essential 
to work with a homogeneous version of the cyclic polytopes, 
hence we consider $C^*_d( \tau_1, \ldots, \tau_n)$ (or $\RR_{\geq 0} Q$) 
instead of $C_d( \tau_1, \ldots, \tau_n)$. 
For $n$ real numbers $\tau_1, \ldots, \tau_n$ with $\tau_1 < \cdots < \tau_n$, we set 
$$v_i \defa (1, \tau_i, \tau_i^2, \ldots, \tau_i^d) \in \RR^{d+1} \; \text{ for } \; 1 \leq i \leq n. $$ 
In other words, $C^{*}_d( \tau_1, \ldots, \tau_n) = \con(\{ v_i : 1 \leq i \leq n \}) \subset \RR^{d+1}$. 
See \cite[Chapter 0]{ziegler} for some basic properties of cyclic polytopes.

Recall that the cyclic polytope $C^*_d(\tau_1,\dots ,\tau_n)$ is {\em simplicial}, i.e.,
all of their faces are simplexes.
Hence its boundary complex is just a $(d-1)$-dimensional simplicial complex on $\set{v_1,\dots ,v_n}$.
Following usual convention, we set $[n] := \set{1,\dots ,n}$.
Assigning $i$ to $v_i$ for each $i$, we can regard the simplicial complex as the one on $[n]$.
Let $\Gamma_d(\tau_1,\dots ,\tau_n)$ denote this simplicial complex on $[n]$.
The faces of $\Gamma_d(\tau_1,\dots ,\tau_n)$ are completely characterized
in terms of their {\em type}.
A non-empty subset $W \subset [n]$ is said to be {\em contiguous} if
$W = \{ i,i+1,\ldots, j\}$
for some positive integers $i$ and $j$ with $1 < i  \le j  < n$,
and to be an {\em end set} if either $W = \{ 1,\dots, i\}$
or $W = \{ i,\dots ,n \}$ for some $i$ with $1 \le i \le n$.
We set
$\max \emptyset := 1$ and $\min \emptyset := n$.
Any subset $W \subset [n]$ has unique decomposition
\begin{equation}\label{eq:decomp}
W := Y_1 \sqcup X_1 \sqcup X_2 \sqcup \cdots \sqcup X_t
\sqcup Y_2,
\end{equation}
such that
\begin{enumerate}
\item $Y_1$, $Y_2$ are empty or end sets, and each $X_i$ is contiguous;
\item $\max X_i < \min X_{i+1}$ for all $i$ with $0 \le i \le t$,
where we set $X _0 := Y_1$ and $X_{t+1} := Y_2$.
\end{enumerate}
The subset $W$ is said to be of {\em type} $(r,s)$ where $r = \# W$
and $s = \# \setb{i}{\# X_i \text{ is odd}}$.

\begin{Proposition}[cf. {\cite[pp. 226--227]{BH}}]\label{thm:prop_cycl_poly}
Let $W$ be a subset of $[n]$. The following statements hold. 
\begin{enumerate}
\item Any $d+1$ elements of $v_1,\dots,v_n$ are linearly independent over $\RR$. 
\item If $\# W \le \floor{d/2}$, then $W$ is a face of 
$\Gamma_d( \tau_1, \ldots, \tau_n)$. 
\item The subset $W$ is a face of $\Gamma_d( \tau_1, \ldots, \tau_n)$ of dimension $\# W - 1$ 
if and only if $0 \le \# W \le d$ and $W$ is a type $(\# W, s)$ for some integer $s$ 
with $0 \le s \le d- \# W$.
\end{enumerate}
\end{Proposition}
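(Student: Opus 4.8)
The plan is to prove the three statements of Proposition~\ref{thm:prop_cycl_poly} by exploiting the structure of the moment curve $\tau \mapsto (1,\tau,\tau^2,\ldots,\tau^d)$, whose defining feature is that evaluations at distinct parameters behave like columns of a Vandermonde matrix. For part~(1), I would observe that any $d+1$ of the vectors $v_{i_0},\ldots,v_{i_d}$ form a square $(d+1)\times(d+1)$ matrix whose determinant is the Vandermonde determinant $\prod_{j<k}(\tau_{i_k}-\tau_{i_j})$. Since the $\tau_i$ are pairwise distinct (indeed $\tau_1<\cdots<\tau_n$), this determinant is nonzero, so the $d+1$ vectors are linearly independent over $\RR$. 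This is the cleanest of the three and I would dispatch it first, as it underlies the dimension count needed later.

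For part~(3), which is the heart of the statement, I would invoke \emph{Gale's evenness condition}, the classical combinatorial characterization of the facets (and more generally the faces) of a cyclic polytope. The strategy is to show that the type-$(r,s)$ description is simply a reformulation of Gale's condition. Concretely, a set $W$ with $\#W=d$ is a facet of $\Gamma_d$ exactly when, between any two elements of $W$, an even number of vertices lie outside $W$; translating this into the language of the decomposition \eqref{eq:decomp} shows that the interior blocks $X_i$ must each have even cardinality while the end sets $Y_1,Y_2$ are unconstrained, which is precisely the condition that the number of odd-sized contiguous blocks $s$ be zero for a facet. For faces of dimension $\#W-1<d$, I would use that $W$ is a face if and only if it is contained in some facet, and then verify that the attainable range for $s$ is $0\le s\le d-\#W$, matching the claim. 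The key computation is checking that one can always extend a set of type $(r,s)$ with $s\le d-r$ to a facet (type $(d,0)$) by filling in gaps so as to pair up the odd blocks, and conversely that $s>d-r$ obstructs such an extension.

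For part~(2), I would deduce it as a consequence of part~(3): if $\#W\le\floor{d/2}$, then the number of contiguous blocks is at most $\#W$, so trivially $s\le\#W\le\floor{d/2}\le d-\floor{d/2}\le d-\#W$ whenever $\#W\le\floor{d/2}$, and hence $W$ satisfies the type condition of part~(3) and is a face. Alternatively one can argue directly that any such small set extends to a facet, but routing through~(3) is the most economical.

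The main obstacle will be the careful bookkeeping in part~(3), namely verifying that the type condition $s\le d-\#W$ is genuinely equivalent to Gale's evenness condition under the decomposition \eqref{eq:decomp}, keeping track of the special cases introduced by the end sets and by the conventions $\max\emptyset:=1$, $\min\emptyset:=n$. Since this proposition is cited from \cite[pp.~226--227]{BH} and is a well-established fact about cyclic polytopes, I would lean on the reference for the underlying Gale evenness characterization and restrict my own argument to the translation between the two combinatorial formulations, so that the proof remains short and self-contained while the deeper polytopal input is borrowed.
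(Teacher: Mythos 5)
Your overall plan is correct, but note first that the paper contains no proof of this proposition at all: it is imported wholesale from the cited reference \cite[pp.~226--227]{BH}, where part (1) is exactly your Vandermonde computation, while (2) and (3) are established \emph{directly} on the moment curve, by producing for a set $W$ of admissible type a polynomial $p$ of degree at most $d$ whose coefficient vector defines a supporting hyperplane (vanishing precisely at the $\tau_i$, $i \in W$, nonnegative at the other $\tau_j$), and, for necessity, by counting roots and sign changes of such a polynomial; Gale's evenness condition is then the special case $r = d$, $s = 0$ --- indeed a commented-out corollary in this paper's source derives Gale's condition \emph{from} (3), i.e., in the opposite direction from you. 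Your route --- borrow Gale's condition for facets from the literature and recover (3) via ``$W$ is a face if and only if $W$ is contained in a facet,'' which is legitimate here because (1) makes every proper face a simplex --- is a genuinely different and workable reduction. What it buys is that all analytic input is outsourced; what it costs is precisely the bookkeeping you acknowledge: for sufficiency, extending a type $(r,s)$ set with $s \le d-r$ to a type $(d,0)$ set (one added element per odd interior block, leftover budget absorbed in adjacent pairs or in the unconstrained end sets, with care when gaps have size $1$ and blocks merge); for necessity, an injective assignment of the $d-r$ elements of $F \setminus W$ to the odd interior blocks of $W$, which works because each maximal block $B$ of a facet $F$ is either an end set or has even cardinality, and within $B$ the number of $F\setminus W$ elements is at least the number of gaps and has the right parity. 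Your deduction of (2) from (3) via $s \le \# W \le \floor{d/2} \le d - \floor{d/2} \le d - \# W$ is correct.

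One concrete slip to fix: your paraphrase of Gale's condition is false as written. The correct statement is that between any two indices \emph{not} in $W$ there must lie an \emph{even number of elements of} $W$; you instead require an even number of non-elements between any two elements of $W$. These are not equivalent: take $d=4$, $n=6$ and $W = \set{1,2,4,5}$, which is a facet of $C_4(\tau_1,\ldots,\tau_6)$ (it has type $(4,0)$, with end set $\set{1,2}$ and even interior block $\set{4,5}$, and the only pair of non-elements, $3$ and $6$, has the two elements $4,5$ of $W$ between them), yet exactly one vertex, namely $3$, lies outside $W$ between the elements $2$ and $4$, violating your version. Your subsequent translation into the block decomposition \eqref{eq:decomp} --- interior blocks even, end sets unconstrained --- is the correct characterization of facets, so the error does not propagate through the rest of the argument, but the intermediate statement must be replaced by the actual evenness condition.
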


Hereafter, we will assume that $\tau_1,\ldots,\tau_n$ are integers, 
that is, $v_1,\ldots,v_n \in \ZZ^{d+1}$. 
We introduce a special representation of integral cyclic polytopes which is sometimes helpful. 
Write the vectors $v_1,\ldots,v_n$ as column vectors into a matrix, namely, 
\begin{equation}\label{eq:matrix1}
(v_1,v_2,\ldots,v_n)=
\begin{pmatrix}
1        &1        &\cdots  &1 \\
\tau_1   &\tau_2   &\cdots  &\tau_n   \\
\tau_1^2 &\tau_2^2 &\cdots  &\tau_n^2 \\
\vdots   &\vdots   &        &\vdots   \\
\tau_1^d &\tau_2^d &\cdots  &\tau_n^d 
\end{pmatrix}. 
\end{equation}

\begin{Lemma}\label{eg:delta}
By elementary row operations, \eqref{eq:matrix1} is transformed into the matrix 
\begin{eqnarray}\label{eq:mat}
\begin{pmatrix}
1         & 1          & 1           & \cdots & 1                 & 1           & \cdots & 1 \\
0         & \tD_{1,2} & \tD_{1,3} & \cdots & \tD_{1,d}   & \tD_{1,d+1}  & \cdots & \tD_{1,n} \\
0         & 0          & \tD_{2,3}  & \cdots & \tD_{2,d}   & \tD_{2,d+1} & \cdots & \tD_{2,n} \\
\vdots & \vdots   & \ddots    & \ddots &                   & \vdots    & \cdots & \vdots \\
\vdots & \vdots   &              & \ddots & \ddots         &\vdots    & \cdots & \tD_{d - 2,n} \\
0         & 0          &\cdots       & \cdots & 0                 & \tD_{d,d+1} & \cdots & \tD_{d,n} 
\end{pmatrix},
\end{eqnarray}
where $\tD_{i,j} := \prod_{k=1}^i\Delta_{kj}$ and $\Delta_{ij}=\tau_j-\tau_i$ for $1 \leq i\not=j \leq n$. 
In particular, the convex hull of the column vectors of this matrix is unimodularly equivalent to $C^*_d(\tau_1, \ldots, \tau_n)$. 
\end{Lemma}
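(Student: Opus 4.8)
The plan is to realize the claimed reduction through the Newton interpolation basis attached to the nodes $\tau_1,\ldots,\tau_n$. For $0\le i\le d$ I would set $p_i(x) \defa \prod_{k=1}^i (x-\tau_k)$, the monic polynomial of degree $i$ vanishing at $\tau_1,\ldots,\tau_i$ (with the convention $p_0=1$). Since $\tau_1,\ldots,\tau_n$ are integers, the coefficients of each $p_i$ lie in $\ZZ$, and writing $p_i(x)=x^i+\sum_{k=0}^{i-1}c_{ik}x^k$ with $c_{ik}\in\ZZ$ exhibits the change from the monomial basis $1,x,\ldots,x^d$ to $p_0,\ldots,p_d$ as upper unitriangular over $\ZZ$. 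This is the conceptual heart: the Vandermonde rows are being re-expressed in the Newton basis.

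Denote by $r_i=(\tau_1^i,\ldots,\tau_n^i)$ the $(i+1)$-st row of \eqref{eq:matrix1}. The operation I would carry out, for $i=1,\ldots,d$ in turn, is to replace $r_i$ by $\tilde r_i \defa r_i+\sum_{k=0}^{i-1}c_{ik}r_k$; as each $c_{ik}$ is an integer, this is a composition of elementary row operations. The $j$-th entry of $\tilde r_i$ is then $p_i(\tau_j)=\prod_{k=1}^i(\tau_j-\tau_k)=\tD_{i,j}$, directly from the definition $\tD_{i,j}=\prod_{k=1}^i\Delta_{kj}$. For $j\le i$ the node $\tau_j$ is a root of $p_i$, so the entry vanishes; for $j>i$ it equals $\tD_{i,j}$, while the leading entry $\tD_{i,i+1}=\prod_{k=1}^i(\tau_{i+1}-\tau_k)$ is nonzero since the $\tau_k$ are distinct. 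This is exactly the echelon shape of \eqref{eq:mat}, with the empty product $\tD_{0,j}=1$ giving the top row of ones, so the first assertion follows.

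For the ``in particular'' statement I would record that this whole sequence of operations is left multiplication by a single matrix $U$: it is integral because all $c_{ik}$ are, and unitriangular because we only add multiples of earlier rows to later ones, whence $\det U=1$ and $U$ is a lattice automorphism of $\ZZ^{d+1}$. The columns of \eqref{eq:mat} are precisely $Uv_1,\ldots,Uv_n$, so applying $U$ to $C^*_d(\tau_1,\ldots,\tau_n)=\con(\{v_i\})$ yields $\con(\{Uv_i\})$, the convex hull of the columns of \eqref{eq:mat}. As $U$ is unimodular, this convex hull is unimodularly equivalent to $C^*_d(\tau_1,\ldots,\tau_n)$.

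I do not anticipate a serious obstacle: the computation is the classical passage from the Vandermonde matrix to the Newton basis. The one point deserving care is the integrality of the transformation, on which the unimodular equivalence rests; one must check that the coefficients $c_{ik}$, which are elementary symmetric functions of $\tau_1,\ldots,\tau_i$ up to sign, lie in $\ZZ$. This is exactly where the standing hypothesis $\tau_k\in\ZZ$ enters, and without it one would only obtain equivalence over $\QQ$ rather than a genuine unimodular equivalence.
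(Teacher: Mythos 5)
Your proof is correct, and it is the intended argument: the paper itself states Lemma \ref{eg:delta} without proof, deferring to \cite[Section 1]{HHKO}, where the transformation is carried out by exactly this kind of successive integral row reduction of the Vandermonde matrix, which is equivalent to your one-shot change to the Newton basis $p_i(x)=\prod_{k=1}^i(x-\tau_k)$. You also correctly isolate the only delicate point, namely that the unitriangular transition matrix has integer entries (so the equivalence is unimodular over $\ZZ^{d+1}$, not merely over $\QQ$), which is precisely where the standing assumption $\tau_k\in\ZZ$ is used.
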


%A proof of the above lemma is essentially the same as a proof of the well-known Vandermonde determinant. 
Note that Lemma \ref{eg:delta} is valid for any ordering of the parameters $\tau_1,\dotsc,\tau_n$, 
i.e., any ordering of $v_1,\ldots,v_n$. 
By this lemma, $C_d^*(\tau_1,\ldots,\tau_n)$ can be regarded as the convex hull of 
the column vectors of \eqref{eq:mat}. 
Thus, in the sequel, we will often use $v_i$ as the $i$th column vector of \eqref{eq:mat} 
in stead of \eqref{eq:matrix1}.

We also often apply the following 
\begin{Lemma}\label{equiv}
An integral cyclic polytope $C^{*}_d(\tau_1,\ldots,\tau_d)$ is unimodularly equivalent to 
$C^{*}_d(-\tau_n, \ldots, -\tau_1)$. Moreover, for any integer $m$, 
$C^{*}_d(\tau_1,\ldots,\tau_d)$ is unimodularly equivalent to $C^{*}_d(\tau_1 + m , \ldots, \tau_n + m)$. 
\end{Lemma}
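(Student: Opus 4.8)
The plan is to prove each of the two claimed equivalences by exhibiting an explicit matrix in $\mathrm{GL}_{d+1}(\ZZ)$ whose first row is $(1,0,\ldots,0)$ and which carries the vertex set $\{v_1,\ldots,v_n\}$ of one polytope bijectively onto that of the other. Since each such matrix fixes the affine hyperplane $\{x_0=1\}$ in which $C^*_d$ lives, it acts there as a lattice-preserving affine map, and a map sending vertices to vertices sends the convex hull to the convex hull; hence producing the vertex bijection is all that is required for unimodular equivalence.

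For the reflection claim, I would take the diagonal matrix $D=\operatorname{diag}(1,-1,1,\ldots,(-1)^d)$, which lies in $\mathrm{GL}_{d+1}(\ZZ)$ since $\det D=\pm 1$. Because the $k$th coordinate of $v_i$ is $\tau_i^k$, the matrix $D$ negates exactly the odd powers, so
$$
D v_i=(1,-\tau_i,(-\tau_i)^2,\ldots,(-\tau_i)^d),
$$
which is precisely the point of the moment curve at parameter $-\tau_i$. As $\tau_1<\cdots<\tau_n$ forces $-\tau_n<\cdots<-\tau_1$, the images $D v_n,\ldots,D v_1$ are the vertices of $C^*_d(-\tau_n,\ldots,-\tau_1)$ listed in increasing order of parameter, so $D$ realizes the first equivalence.

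For the translation claim, the key observation is that the substitution $t\mapsto t+m$ acts linearly on the coordinate vector $(1,t,\ldots,t^d)$ through the binomial expansion $(t+m)^k=\sum_{j=0}^{k}\binom{k}{j}m^{k-j}t^j$. I would encode this as the lower-triangular matrix $U$ with entries $U_{kj}=\binom{k}{j}m^{k-j}$ for $0\le j\le k\le d$ and $U_{kj}=0$ otherwise. Its entries are integers because $m\in\ZZ$ and the binomial coefficients are integers, its diagonal entries are all $1$, and its first row is $(1,0,\ldots,0)$; thus $U$ is unitriangular and lies in $\mathrm{GL}_{d+1}(\ZZ)$. By construction $U v_i=(1,\tau_i+m,\ldots,(\tau_i+m)^d)$, the vertex at parameter $\tau_i+m$, and since translation preserves the ordering of the $\tau_i$, the map $U$ sends the vertex set of $C^*_d(\tau_1,\ldots,\tau_n)$ onto that of $C^*_d(\tau_1+m,\ldots,\tau_n+m)$.

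I do not expect a genuine obstacle in this argument; both equivalences reduce to writing down the correct transformation matrix. The only points demanding care are verifying integrality together with $\det=\pm 1$ (immediate for $D$, and clear for $U$ from its unitriangular shape) and confirming that the induced relabeling of vertices respects the increasing order imposed on the parameters, which holds automatically in both cases.
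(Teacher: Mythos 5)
Your proof is correct, and it is the standard argument: the paper itself states Lemma \ref{equiv} without proof, deferring to \cite[Section 1]{HHKO}, where the equivalences are established by exactly the transformations you exhibit. Both your matrices are fine --- the alternating-sign diagonal matrix $D=\operatorname{diag}(1,-1,\ldots,(-1)^d)$ for the reflection and the lower unitriangular Pascal-type matrix $U$ with $U_{kj}=\binom{k}{j}m^{k-j}$ for the translation --- and your observation that each fixes the hyperplane $\{x_0=1\}$ (first row $(1,0,\ldots,0)$) and permutes the vertex set, hence carries one convex hull onto the other, correctly completes the verification of unimodular equivalence.
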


The first statement of this lemma says that 
$C_d(\tau_1,\ldots,\tau_n)$ is unimodularly equivalent to $C_d(\tau_1',\ldots,\tau_n')$ 
if $\tau_{i+1}'-\tau_i'=\Delta_{n-i,n-i+1}$ for every $1 \leq i \leq n-1$.

%
% b-vectors
We define a certain class of vectors which we will use in Section \ref{R_1}. 
Let $S=\{i_1,\ldots,i_q\} \subset [n]$ be a non-empty set, where $i_1< \cdots <i_q$. 
Then we define 
\[ \bvec{S} \defa \sum_{i\in S} \frac{1}{\prod_{j \in S \setminus \{i\}} \Delta_{ij}} v_i 
=\sum_{k=1}^q \frac{(-1)^{k+1}}{\prod_{j \in S \setminus \{i_k\}} |\Delta_{i_kj}|}v_{i_k}, \]
where $\bvec{S}=v_{i_1}$ when $q=1$, i.e., $\# S=1$. 
If $S$ is small, we will sometimes omit the brackets around the elements, 
thus we write, for example, $\bvec{ij} = \bvec{\set{i,j}}$. 
However, the vector does not depend on the order of the indices.
%
%\begin{Example}{\em 
%Let us write down $\bvec{S}$'s for small sets $S$. Assume $1\leq i<j<k<l\leq n$. Then 
%\begin{align*}
%\bvec{i} &= v_i, \\
%\bvec{ij} &= \frac{1}{\Delta_{ij}} v_i - \frac{1}{\Delta_{ij}} v_j, \\
%\bvec{ijk} &= \frac{1}{\Delta_{ij}\Delta_{ik}} v_i - \frac{1}{\Delta_{ij}\Delta_{jk}} v_j 
%+ \frac{1}{\Delta_{ik}\Delta_{jk}} v_k, \\
%\bvec{ijkl} &= \frac{1}{\Delta_{ij}\Delta_{ik}\Delta_{il}} v_i - \frac{1}{\Delta_{ij}\Delta_{jk}\Delta_{jl}} v_j 
%+ \frac{1}{\Delta_{ik}\Delta_{jk}\Delta_{kl}} v_k - \frac{1}{\Delta_{il}\Delta_{jl}\Delta_{kl}} v_l. 
%\end{align*}
%The sign changes are due to a reordering of the indices since $\Delta_{ij} = - \Delta_{ji}$. 
%If $v_i,v_j,v_k,v_l$ are given in the form \eqref{eq:mat}, i.e., if 
%\begin{equation*}
%\begin{pmatrix}
%v_i \\
%v_j \\
%v_k \\
%v_l
%\end{pmatrix}=
%\begin{pmatrix}
%1&0           &\cdots                 &\cdots                            &\cdots &\cdots &0 \\
%1&\Delta_{ij} &0                      &\ddots                            &\cdots &\cdots &\vdots \\
%1&\Delta_{ik} &\Delta_{ik}\Delta_{jk} &\ddots                            &\cdots &\cdots &\vdots \\
%1&\Delta_{il} &\Delta_{il}\Delta_{jl} &\Delta_{il}\Delta_{jl}\Delta_{kl} &0      &\cdots &0\\
%\end{pmatrix}, 
%\end{equation*}
%then $\bvec{i}=(1,0,\ldots,0)$, $\bvec{ij}=(0,-1,0,\ldots,0)$, $\bvec{ijk}=(0,0,1,0,\ldots,0)$ 
%and $\bvec{ijkl}=(0,0,0,-1,0,\ldots,0)$. 
%In general, $\bvec{1}, \bvec{12}, \ldots, \bvec{12 \cdots d+1}$ look like 
%$(0,\ldots,0,\pm 1,0,\ldots,0)$ when $v_1,\ldots,v_{d+1}$ are of the form \eqref{eq:mat}. 
%}\end{Example} 
%
The following proposition collects the basic properties on these vectors. 
\begin{Proposition}\label{prop:bvectors}
\begin{enumerate}
\item For any non-empty set $S \subset [n]$, one has $\bvec{S} \in \ZZ^{d+1}$. 
\item Let $S \subset [n]$ and $a,b\in S$ with $a\neq b$. Then we have a recursion formula 
\[ \bvec{S} = \frac{1}{\Delta_{ba}} \bvec{S\setminus \{a\}} + \frac{1}{\Delta_{ab}} \bvec{S\setminus \{b\}}. \]
\item For any distinct $d+1$ indices $i_1,\dotsc,i_{d+1} \in [n]$ (not necessarily ordered), the vectors
\[ \bvec{i_1}, \bvec{i_1 i_2}, \bvec{i_1 i_2 i_3}, \ldots, \bvec{i_1 \cdots i_{d+1}} \]
form a $\ZZ$-basis for $\ZZ^{d+1}$. 
\item If $\# S \geq d+2$, then $\bvec{S}=0$.
\end{enumerate}
\end{Proposition}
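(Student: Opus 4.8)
The plan is to reduce all four statements to a single closed formula for the coordinates of $b_S$. Index the coordinates of a vector in $\ZZ^{d+1}$ by $m=0,1,\ldots,d$, so that the $m$-th coordinate of $v_i$ is $\tau_i^m$. Then by definition, with $q=\#S$,
\[ (b_S)_m = \sum_{i \in S} \frac{\tau_i^m}{\prod_{j \in S \setminus \{i\}}(\tau_j - \tau_i)} = (-1)^{q-1}\sum_{i \in S}\frac{\tau_i^m}{\prod_{j \in S \setminus \{i\}}(\tau_i - \tau_j)}. \]
The central claim I would establish is the identity
\[ (b_S)_m = (-1)^{q-1}\, h_{m-q+1}(\tau_i : i \in S), \]
where $h_k$ is the complete homogeneous symmetric polynomial of degree $k$, with $h_0=1$ and $h_k=0$ for $k<0$. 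This is precisely the classical evaluation of the divided difference of the monomial $x^m$ at the nodes $\{\tau_i\}_{i\in S}$; in particular $(b_S)_m=0$ for $m<q-1$ and $(b_S)_{q-1}=(-1)^{q-1}$.

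Granting this formula, the four parts follow quickly. For (1), the $\tau_i$ are integers and each $h_k$ has integer coefficients, so every coordinate $(b_S)_m$ is an integer and $b_S\in\ZZ^{d+1}$. For (4), if $q\geq d+2$ then $m-q+1\leq d-(d+2)+1=-1<0$ for every $m\in\{0,\ldots,d\}$, hence all coordinates vanish and $b_S=0$. For (3), I would form the $(d+1)\times(d+1)$ matrix whose $k$-th column is $b_{i_1\cdots i_k}$ (a set of size $k$): the formula shows this column is zero in coordinates $0,\ldots,k-2$ and equals $(-1)^{k-1}$ in coordinate $k-1$, so the matrix is triangular with diagonal entries $\pm1$, hence has determinant $\pm1$ and its columns form a $\ZZ$-basis of $\ZZ^{d+1}$. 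Since the formula is symmetric in the elements of $S$, the ordering of $i_1,\ldots,i_{d+1}$ is irrelevant, matching the hypothesis.

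To prove the closed form, I would first verify the recursion (2) directly and then induct on $q$. For (2), comparing the coefficient of each $v_i$ on both sides of $b_S = \tfrac{1}{\Delta_{ba}}b_{S\setminus\{a\}} + \tfrac{1}{\Delta_{ab}}b_{S\setminus\{b\}}$ reduces, for $i\neq a,b$, to the elementary partial-fraction identity
\[ \frac{1}{(\tau_a - \tau_i)(\tau_b - \tau_i)} = \frac{1}{\tau_a - \tau_b}\left(\frac{1}{\tau_b - \tau_i} - \frac{1}{\tau_a - \tau_i}\right), \]
while the cases $i=a$ and $i=b$ are immediate; this is a routine check. Using (2) together with $\Delta_{ba}=\tau_a-\tau_b$ and $\Delta_{ab}=\tau_b-\tau_a$, the induction step for the closed form follows from the standard recursion for complete homogeneous symmetric polynomials,
\[ \frac{h_{k}(T\cup\{\tau_b\}) - h_{k}(T\cup\{\tau_a\})}{\tau_b - \tau_a} = h_{k-1}(T\cup\{\tau_a,\tau_b\}), \]
applied with $T=\{\tau_i : i\in S\setminus\{a,b\}\}$ and $k=m-q+2$, after substituting the inductive expressions for $b_{S\setminus\{a\}}$ and $b_{S\setminus\{b\}}$. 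The base case $q=1$ is $b_{\{i\}}=v_i$, whose $m$-th coordinate $\tau_i^m=h_m(\tau_i)$ agrees with the formula.

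The main obstacle I anticipate is purely bookkeeping rather than conceptual: keeping the signs consistent between the two conventions for the denominators ($\prod(\tau_j-\tau_i)$ versus $\prod(\tau_i-\tau_j)$, which introduces the factor $(-1)^{q-1}$), and ensuring the symmetric-function formula carries exactly the degree $m-q+1$ so that the vanishing ranges in (3) and (4) read off correctly. The underlying inputs—partial fractions and the $h_k$-recursion—are routine, so the real work is organizing the induction and checking the degree and sign alignment at each step.
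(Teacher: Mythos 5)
Your proof is correct, and there is nothing in the paper to compare it against line by line: Proposition \ref{prop:bvectors} is stated here without proof, deferred to Section~1 of \cite{HHKO}. Your route -- verify the recursion (2) by the partial-fraction identity, then induct to the closed form $(\bvec{S})_m = (-1)^{\#S-1}h_{m-\#S+1}(\tau_i : i \in S)$, i.e.\ the classical divided-difference evaluation of $x^m$ -- is a clean way to organize the argument, and it delivers all four parts uniformly: integrality (1) because the $h_k$ are integer polynomials, vanishing (4) by the degree count $m-q+1 \le -1$, and the unimodular triangularity in (3) from the leading entries $(-1)^{k-1}$ in coordinate $k-1$, with the symmetry of $h_k$ disposing of the ``not necessarily ordered'' hypothesis. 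I checked the sign bookkeeping you flagged: your recursion is consistent with the standard divided-difference recursion after the factor $(-1)^{q-1}$ relating $\prod_{j}(\tau_j-\tau_i)$ to $\prod_j(\tau_i-\tau_j)$, and the induction step via $\bigl(h_k(T\cup\{\tau_b\})-h_k(T\cup\{\tau_a\})\bigr)/(\tau_b-\tau_a)=h_{k-1}(T\cup\{\tau_a,\tau_b\})$ with $k=m-q+2$ produces exactly the claimed degree and sign. One small point worth making explicit in a final write-up: the $h$-recursion must also be invoked for $k\le 0$ (so that the vanishing range propagates through the induction), which holds trivially under the convention $h_0=1$, $h_k=0$ for $k<0$ -- or in one stroke from the generating function $\sum_k h_k t^k=\prod_{x}(1-xt)^{-1}$, which proves the recursion for all $k$ simultaneously.
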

%
%
%
%
%
%

%Applying this construction, we obtain another useful fact on integral cyclic polytopes. 

Finally, by applying this construction, we state 
\begin{Lemma}\label{lemma:basis}
For an integral cyclic polytope $\Pc \subset \RR^d$ of dimension $d$, 
one has 
\[\ZZ \Ac_{\Pc}=\ZZ^{d+1}\,.\] 
%The integer points contained in an integral cyclic polytope generate $\ZZ^d$.
\end{Lemma}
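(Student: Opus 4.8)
The plan is to prove the two inclusions separately. The inclusion $\ZZ\Ac_\Pc \subseteq \ZZ^{d+1}$ is immediate from $\Ac_\Pc \subseteq \ZZ^{d+1}$, so all the work goes into showing $\ZZ^{d+1} \subseteq \ZZ\Ac_\Pc$. First I would reduce to the simplex case $n = d+1$: by Proposition \ref{thm:prop_cycl_poly}(1) the vectors $v_1,\dots,v_{d+1}$ are linearly independent, so $\Delta^* \defa \con(\{v_1,\dots,v_{d+1}\})$ is a genuine $d$-simplex contained in $\Pc^*$. Hence $\Ac_{\Delta^*} \subseteq \Ac_\Pc$, and it suffices to prove $\ZZ\Ac_{\Delta^*} = \ZZ^{d+1}$. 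So I assume $\Pc^* = \Delta^*$ from now on.

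Next I would pass to the coordinates of Lemma \ref{eg:delta}, writing $v_i = \sum_{k=0}^{i-1}\tD_{k,i}e_k$ with respect to the standard basis $e_0,\dots,e_d$; this is harmless since unimodular equivalence preserves $\ZZ^{d+1}$ and lattice-point generation. The point of this normal form is that each $v_i$ with $i \le d$ has vanishing last coordinate, so the facet $F \defa \con(\{v_1,\dots,v_d\})$ lies entirely in the coordinate hyperplane $\{x_d = 0\}$, and by Lemma \ref{eg:delta} applied in dimension $d-1$ it is exactly the transformed $(d-1)$-dimensional cyclic simplex on $\tau_1,\dots,\tau_d$, sitting inside $\{x_d=0\}\cap\ZZ^{d+1} = \langle e_0,\dots,e_{d-1}\rangle$. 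I would then induct on $d$ (base case $d=1$ being a lattice segment, which is trivial): the inductive hypothesis gives that the lattice points of $F$ generate $\langle e_0,\dots,e_{d-1}\rangle$, and since $F\cap\ZZ^{d+1}\subseteq\Ac_{\Delta^*}$, this yields $\langle e_0,\dots,e_{d-1}\rangle \subseteq \ZZ\Ac_{\Delta^*}$.

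It then remains only to produce the missing generator $e_d$, and for this I would exhibit the all-ones lattice point $\mathbf 1 = (1,\dots,1)$. It is integral and at height $1$, so once I know $\mathbf 1 \in \Delta^*$ it lies in $\Ac_{\Delta^*}$; and since $e_0+\dots+e_{d-1}$ already lies in $\ZZ\Ac_{\Delta^*}$, I get $e_d = \mathbf 1 - (e_0+\dots+e_{d-1}) \in \ZZ\Ac_{\Delta^*}$, completing the induction. To see $\mathbf 1\in\Delta^*$ I would solve $\sum_i\lambda_i v_i = \mathbf 1$ with $\sum_i\lambda_i=1$; reading off coordinates gives $\sum_i \lambda_i\,\tD_{k,i}=1$ for $0\le k\le d$, i.e.\ $\sum_i \lambda_i P_k(\tau_i)=1$ where $P_k(x)=\prod_{l=1}^k(x-\tau_l)$ is the Newton basis, and expanding via divided differences yields the closed form
\[
\lambda_i \;=\; \sum_{m=i}^{d+1}\ \frac{1}{\prod_{j\le m,\ j\neq i}(\tau_i-\tau_j)}\,.
\]
The crux is to check $\lambda_i \ge 0$: the $m$-th summand has sign $(-1)^{m-i}$, and passing from $m$ to $m+1$ divides its absolute value by $|\tau_i-\tau_{m+1}|\ge 1$, so each $\lambda_i$ is an alternating sum of non-increasing magnitudes and is therefore nonnegative.

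I expect the genuine obstacle to be exactly this last step, the existence of the interior lattice point realizing the top direction. One cannot get $e_d$ from edges alone: the primitive vectors along the edges $[v_j,v_{d+1}]$ have last coordinate $\prod_{l\le d,\,l\neq j}\Delta_{l,d+1}$, and the gcd of these over $j$ can exceed $1$ (e.g.\ when all the $\Delta_{l,d+1}$ are even), so only a suitable non-edge lattice point such as $\mathbf 1$ can supply the final unit. Pinning down such a point, and in particular the positivity of the barycentric coordinates $\lambda_i$, is where the real content of the argument sits; the rest is the formal bookkeeping of the reduction and the induction.
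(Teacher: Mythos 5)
Your proof is correct, and it reorganizes the argument rather than repeating the paper's. The paper's proof (cited from \cite[Lemma 1.6]{HHKO} and reprised in the proof of Proposition \ref{R1condition}) is a one-shot construction: for indices $i_1,\dots,i_{d+1}$ it forms the nested sums $c_j = \sum_{l=j}^{d+1} \bvec{i_l\cdots i_{d+1}}$, shows each $c_j$ is an integer point of $\Pc^*$ using Proposition \ref{prop:bvectors}(1) together with precisely the alternating-sum-of-non-increasing-magnitudes positivity computation you carry out for your $\lambda_i$, and concludes that the $c_j$ form a $\ZZ$-basis via the triangular change of basis from Proposition \ref{prop:bvectors}(3). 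You instead induct on $d$ in the normal form of Lemma \ref{eg:delta}, where the target basis $(1,0,\dots,0),(1,1,0,\dots,0),\dots,(1,\dots,1)$ is visibly unimodular, so the only content is membership of the truncated all-ones points in the polytope; your divided-difference identity is sound (the functional $\sum_{m=1}^{d+1}[\tau_1,\dots,\tau_m]$ sends each Newton polynomial $P_k$ to $1$, and the $k=0$ equation gives $\sum_i\lambda_i=1$), and the positivity follows as you say since consecutive summands differ by a factor $|\tau_i-\tau_{m+1}|\ge 1$ and the leading term $m=i$ is positive. In fact the two proofs are the same construction in different clothes: in the coordinates of \eqref{eq:mat} one computes $\bvec{\{1,\dots,m\}} = (-1)^{m-1}e_{m-1}$, so your point $\mathbf{1}$ is exactly a signed sum of nested $b$-vectors. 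What your route buys is self-containedness — you bypass Proposition \ref{prop:bvectors} entirely, since integrality and the basis property are immediate from triangularity, leaving only one convexity check per dimension. What the paper's formulation buys is face-locality: the same $c_j$-construction applies verbatim to an arbitrary facet $\set{v_{i_1},\dots,v_{i_d}}$ with non-consecutive indices, which is how it is reused in the proof of Proposition \ref{R1condition}, whereas your coordinate-hyperplane induction exploits the special facet $\set{x_d=0}$ and would need the reordering remark after Lemma \ref{eg:delta} to adapt. Your closing observation is also accurate and well-aimed: the primitive vector along an edge $[v_j,v_{d+1}]$ has last coordinate $\prod_{l\le d,\,l\neq j}\Delta_{l,d+1}$ (the content of $v_{d+1}-v_j$ is $\Delta_{j,d+1}$), whose gcd over $j$ can exceed $1$, so a non-edge lattice point such as $\mathbf{1}$ is genuinely needed.
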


\section{Serre's $(R_1)$ property}\label{R_1}

In this section, we prove that $K[\Pc]$ always satisfies Serre's Condition $(R_1)$. 
Moreover, this fact enables us to claim that 
the Cohen--Macaulayness of $K[\Pc]$ is equivalent to its normality.

Recall that a Noetherian ring $R$ is said to satisfy $(S_n)$ if
\[
\depth R_\fp \ge \min\{ n, \dim R_{\fp}\}
\]
for all $\fp \in \Spec(R)$,
and satisfy $(R_n)$ if $R_\fp$ is a regular local ring for all $\fp \in \Spec(R)$ with
$\dim R_\fp \le n$. The conditions $(S_n)$ and $(R_n)$ are called Serre's conditions. 

The well-known criterion for normality of a Noetherian ring,  
Serre's Criterion (cf. {\cite[Theorem 2.2.22]{BH}}), says that 
a Noetherian ring is normal if and only if it satisfies $(R_1)$ and $(S_2)$.

%\begin{Definition}{\em 
%A noetherian ring $R$ satisfies \emph{Serre's condition} $(R_1)$ 
%if the localization $R_{{\mathfrak p}}$ is a regular local ring 
%for every prime ideal ${\mathfrak p}$ of height $1$.
%}\end{Definition}

We use the following combinatorial criterion of $(R_1)$, which can be found 
in \cite[Exercises 4.15 and 4.16]{brunsgubel}. 
%As a notation, we write $gp(X)$ for the lattice generated by a set $X \subset \ZZ^{d+1}$. 
\begin{Proposition}[\cite{brunsgubel}]\label{prop:r1ex}
Let $M$ be an affine monoid, $K$ a field and $K[M]$ its semigroup $K$-algebra. 
Then $K[M]$ satisfies $(R_1)$ if and only if every facet $\Fc$ of $M$ satisfies the following two conditions: 
\begin{enumerate}
\item $\ZZ(M \cap \Fc) = \ZZ M \cap \Hc$, where $\Hc$ is the supporting hyperplane of $\Fc$; 
\item there exists $x \in M$ such that $\sigma_{\Fc}(x) = 1$, 
where $\sigma_{\Fc}$ is a support form of $\Fc$ with integer coefficients. 
\end{enumerate}
\end{Proposition}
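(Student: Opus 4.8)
The plan is to treat $(R_1)$ for the affine monoid domain $R=K[M]$ as a purely height-one phenomenon and then localize facet by facet. First I would recall that for a domain the condition $(R_1)$ is equivalent to demanding that $R_\fp$ be a discrete valuation ring for every height-one prime $\fp$, since the only prime of smaller height is $(0)$, whose localization is the fraction field and hence regular. Thus the entire statement concerns height-one primes, and for a one-dimensional local domain ``regular'' is the same as ``normal'', i.e. ``DVR''.

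Next I would show that only \emph{monomial} height-one primes can fail. Let $\bar M=\ZZ M\cap\RR_{\geq 0}M$ be the saturation and $\bar R=K[\bar M]$ the normalization. Both $R$ and $\bar R$ carry the natural $\ZZ M$-grading, so the conductor $\mathfrak c=\{r\in\bar R : r\bar R\subseteq R\}$ is a monomial ideal and the non-normal locus $V(\mathfrak c)$ is torus-invariant. If $\fp$ is a height-one prime at which $R$ is not regular, then $R_\fp$ is not normal, so $\fp\supseteq\mathfrak c$; the minimal prime of $\mathfrak c$ lying under $\fp$ then has height one, equals $\fp$, and is monomial. Since the monomial height-one primes correspond exactly to the facets $\Fc$ of $\RR_{\geq 0}M$, it suffices to decide, for each facet $\Fc$, whether $R_{\fp_\Fc}$ is a DVR, where $\fp_\Fc=(t^a : a\in M,\ \sigma_\Fc(a)>0)$.

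The core is the local analysis at one facet. Normalizing $\sigma_\Fc$ to be primitive on $\ZZ M$, there is a unique height-one prime $\bar\fp$ of $\bar R$ over $\fp_\Fc$, and $\bar R_{\bar\fp}$ is automatically a DVR with valuation $t^a\mapsto\sigma_\Fc(a)$. As $R_{\fp_\Fc}\subseteq\bar R_{\bar\fp}$ share a fraction field, $R_{\fp_\Fc}$ is a DVR precisely when this inclusion is an equality. I would show that condition (2) supplies a uniformizer (if $\sigma_\Fc(x)=1$ for some $x\in M$ then $t^x$ generates the maximal ideal, whereas if the least positive value of $\sigma_\Fc$ on $M$ exceeds one no uniformizer exists), while condition (1) matches the units supported on $\Hc$: the monomials $t^a$ with $a\in M\cap\Fc$ become units after localizing and span $\ZZ(M\cap\Fc)$, which must coincide with $\ZZ M\cap\Hc=\ZZ(\bar M\cap\Hc)$ (the latter equality holds because $\bar M$ is saturated). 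For sufficiency, given both conditions and any $\bar a\in\bar M$ with $\sigma_\Fc(\bar a)=m$, I would write $\bar a-mx\in\ZZ M\cap\Hc=\ZZ(M\cap\Fc)$ by (1), so that $t^{\bar a}=(t^x)^m u$ with $u$ a unit of $R_{\fp_\Fc}$; this forces $\bar R_{\bar\fp}\subseteq R_{\fp_\Fc}$ and hence equality.

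The main obstacle I anticipate is exactly this local step, namely proving that (1) and (2) are \emph{jointly sufficient} rather than merely necessary: one must control all of $\bar M$, not only its generators, and check that after localizing every saturated monomial is recovered as a unit times a power of the chosen uniformizer. The reduction to monomial primes through the monomial conductor is conceptually clean, but it still relies on the standard facts that $\mathfrak c\neq 0$ and that its height-one components are precisely the facet primes $\fp_\Fc$.
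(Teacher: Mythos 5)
The paper offers no proof of this proposition at all: it is imported verbatim from Bruns--Gubeladze (Exercises 4.15 and 4.16 of \cite{brunsgubel}), so your attempt can only be compared with the standard argument from that source, which it reconstructs correctly and along essentially the expected lines. Your three-step structure is the right one: $(R_1)$ for a domain reduces to height-one primes being DVRs; the conductor $\mathfrak{c}$ of $R=K[M]$ in $\bar R=K[\bar M]$ is nonzero and $\ZZ M$-graded (hence monomial, as each graded component is one-dimensional), so any non-regular height-one prime is a minimal prime of $\mathfrak{c}$ and therefore one of the facet primes $\fp_{\Fc}$; and the local analysis at $\fp_{\Fc}$ against the divisorial valuation $t^a\mapsto\sigma_{\Fc}(a)$ of $\bar R_{\bar\fp}$ correctly identifies (2) with the existence of a uniformizer and (1) with control of the units. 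Two spots deserve tightening, though neither is a fatal gap. First, the necessity of (1) is only gestured at: the clean mechanism is that for $b\in\bar M\cap\Hc$ the element $t^b$ lies in $R_{\fp_{\Fc}}$, so $t^b g=f$ with $f\in R$, $g\notin\fp_{\Fc}$; since $\fp_{\Fc}$ is monomial, $g$ has a homogeneous component $t^{c_0}$ with $c_0\in M\cap\Fc$, and comparing $\ZZ M$-degrees gives $b+c_0\in M\cap\Fc$, whence $b\in\ZZ(M\cap\Fc)$; one also needs the (true, but not automatic) equality $\ZZ M\cap\Hc=\ZZ(\bar M\cap\Hc)$, proved by adding a large multiple of a lattice point in the relative interior of the facet. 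Second, your parenthetical claim that there is a \emph{unique} height-one prime of $\bar R$ over $\fp_{\Fc}$ is asserted without proof, and in the sufficiency step the conclusion ``this forces $\bar R_{\bar\fp}\subseteq R_{\fp_{\Fc}}$'' is slightly off-target, since inverting all of $\bar R\setminus\bar\fp$ inside $R_{\fp_{\Fc}}$ is not immediate; the cleaner finish, which avoids both issues, is to note that once every $t^{\bar a}$ with $\bar a\in\bar M$ lies in $R_{\fp_{\Fc}}$ you get $\overline{R_{\fp_{\Fc}}}=\bar R_{\fp_{\Fc}}\subseteq R_{\fp_{\Fc}}$, so $R_{\fp_{\Fc}}$ is a normal one-dimensional Noetherian local domain and hence a DVR. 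An equivalent and slightly more monoid-theoretic packaging of the same local step, closer to how \cite{brunsgubel} phrases it, is that (1) and (2) together say exactly that the localized monoid $M-(M\cap\Fc)$ equals $\set{a\in\ZZ M \with \sigma_{\Fc}(a)\ge 0}\cong\ZZ^{r-1}\times\ZZ_{\ge 0}$, whose monoid ring visibly localizes to a DVR; your normalization-based phrasing buys the same conclusion at the cost of the conductor machinery you already invoked.
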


Using this, we can prove 
\begin{Proposition}\label{R1condition}
Let $\Pc$ be an integral cyclic polytope. 
Then $K[\Pc]$ always satisfies the condition $(R_1)$. 
\end{Proposition}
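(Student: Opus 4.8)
The plan is to verify the two conditions of Proposition~\ref{prop:r1ex} for each facet of the monoid $M = \ZZ_{\geq 0}\Ac_\Pc$. Since $K[\Pc]$ is the semigroup algebra of $M$, and the cone $\RR_{\geq 0} M$ is unimodularly equivalent (by Lemma~\ref{eg:delta}) to the cone over the cyclic polytope $C^*_d(\tau_1,\ldots,\tau_n)$, its facets correspond to the facets of $C^*_d$, which are governed by the type condition of Proposition~\ref{thm:prop_cycl_poly}(3) (equivalently Gale's evenness condition). So first I would fix a facet $\Fc$, determined by a $d$-element subset $S \subset [n]$ of the appropriate type, with supporting hyperplane $\Hc$ and integer support form $\sigma_\Fc$.

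The key observation driving both conditions is Lemma~\ref{lemma:basis}, which gives $\ZZ\Ac_\Pc = \ZZ^{d+1}$; hence $\ZZ M = \ZZ^{d+1}$ and condition (2) amounts to showing that the primitive integer support form $\sigma_\Fc$ is actually surjective onto $\ZZ$ when evaluated on $M$, i.e.\ that $\sigma_\Fc$ takes the value $1$ somewhere on $M$. Because $\sigma_\Fc$ is chosen primitive (gcd of coefficients equal to $1$) and $\ZZ M = \ZZ^{d+1}$ is the full lattice, $\sigma_\Fc$ is already surjective as a map $\ZZ^{d+1}\to\ZZ$; the content is to find a lattice point of $M$ (not merely of $\ZZ^{d+1}$) on the correct side of $\Hc$ realizing the value $1$. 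Here I expect the $\bvec{S}$ vectors of Proposition~\ref{prop:bvectors} to be the right tool: for the $d$-set $S$ cutting out $\Fc$, one can append a suitable extra index and use the $\ZZ$-basis property (3) together with the integrality (1) to exhibit an element of $\Ac_\Pc$ evaluating to $1$ under $\sigma_\Fc$.

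For condition (1), the inclusion $\ZZ(M\cap\Fc)\subseteq \ZZ M\cap\Hc$ is automatic, so the task is the reverse inclusion: every lattice point of $\ZZ^{d+1}$ lying on $\Hc$ must be an integer combination of the lattice points of $M$ that lie on $\Fc$. The natural approach is to identify $\Fc$ itself, as a cone, with the cone over a lower-dimensional cyclic configuration, namely the one indexed by $S$; restricting \eqref{eq:mat} to the columns in $S$ and using Lemma~\ref{eg:delta} again should exhibit $M\cap\Fc$ as (unimodularly equivalent to) the monoid of a cyclic polytope of dimension $d-1$, whose lattice is the full $\ZZ^d$ by the same Lemma~\ref{lemma:basis} applied one dimension down. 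Comparing this intrinsic lattice with the induced lattice $\ZZ M\cap\Hc$ then yields equality.

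The main obstacle will be bookkeeping the facet combinatorics precisely: a facet $S$ of type $(d,s)$ is not in general an end set or contiguous, so $M\cap\Fc$ is the monoid generated by the columns indexed by $S$ \emph{together with} all other lattice points of $\Pc^*$ that happen to lie on $\Hc$, and one must argue there are no extra such points beyond those accounted for by the cyclic structure on $S$. Controlling this—showing that the interior lattice points of $\Pc^*$ never sit on a facet hyperplane in a way that enlarges the facet lattice, and that the $\bvec{S}$-construction genuinely lands in $M$ on the right side of $\Hc$—is where the real work lies, and it is where the special geometry of cyclic polytopes (all of whose faces are simplices, with explicit support forms) must be exploited.
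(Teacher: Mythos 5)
Your overall framing is correct and, for condition (1) of Proposition~\ref{prop:r1ex}, essentially coincides with the paper's own argument: the paper takes the lattice points $c_j := \sum_{l=j}^{d} b_{i_l\dotsc i_d}$ on the facet $\Fc$, which by Proposition~\ref{prop:bvectors}(3) are part of a $\ZZ$-basis of $\ZZ^{d+1}$ and hence form a basis of $\ZZ^{d+1}\cap\Hc$; this is the same computation as your ``facet is a cyclic polytope one dimension down'' reduction, since in the basis $b_{i_1}, b_{i_1i_2},\dotsc$ the vertices $v_{i_j}$ acquire exactly the Newton-form coordinates of \eqref{eq:mat} for the parameters $\tau_{i_1},\dotsc,\tau_{i_d}$ (note that restricting \eqref{eq:mat} to the columns of $S$ is not literally correct for a general facet $S$; one must redo the triangularization for the sub-configuration, which is legitimate since Lemma~\ref{eg:delta} holds for any ordering of the parameters). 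Also, the worry you flag about extra lattice points of $\Pc^*$ lying on $\Hc$ is moot: only the inclusion $\ZZ M\cap\Hc \subseteq \ZZ(M\cap\Fc)$ needs proof, and additional elements of $M\cap\Fc$ beyond those indexed by $S$ can only enlarge the right-hand side, so they cause no harm.

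The genuine gap is condition (2). You correctly reduce it to exhibiting a point of $M$ on which the primitive support form $\sigma_\Fc$ takes the value $1$, and you correctly guess that the $b_S$-vectors with one appended index $k\notin\set{i_1,\dotsc,i_d}$ are the tool, but you never construct the point, and this is precisely where all the work in the paper's proof lies. The obstacle is that the naive candidate (some lattice point of $\Fc$ plus $\pm b_S$ with $S=\set{k,i_1,\dotsc,i_d}$) need not have nonnegative coefficients in the $v_i$, $i\in S$, i.e.\ need not lie in $\Pc^*$ at all. The paper overcomes this with a specific combinatorial device absent from your sketch: it chooses $F\subset S$ to consist of the elements in alternating positions (even or odd according to the parity of the position of $k$ in $S$), which guarantees both $k\notin F$ and that any two elements of $F$ are separated by an element of $S$, forcing every relevant $\abs{\Delta_{j_qj_{q'}}}\ge 2$. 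This yields that every coefficient of $x' = \sum_{l=1}^{r} b_{j_l\dotsc j_r}$ is strictly positive (the even-position coefficients require the estimate coming from the factors $1-\abs{\Delta_{j_sj_{l-1}}}^{-1}>0$), and that the coefficient of each $v_i$, $i\in F$, in $x'$ is at least $\abs{\prod_{j\in F\setminus\set{i}}\Delta_{ij}}^{-1}$, which dominates the negative contribution $-\abs{\prod_{j\in S\setminus\set{i}}\Delta_{ij}}^{-1}$ of $\mp b_S$ because $F\subset S$. Only then is $x = x'\pm b_S$ a point of $\Ac_\Pc$ with $\sigma_\Fc(x)=1$. Without this construction (or a substitute for it), your proposal proves condition (1) but remains a plan, not a proof, for condition (2), as you yourself acknowledge in your final paragraph.
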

\begin{proof}
First, note that the facets of $\Pc^*$ are in bijection with the facets of the monoid $\ZZ_{\geq 0} \Ac_{\Pc}$. 
%generated by the integer points in $\Pc^*$. 
Let $\Fc$ be a facet of $\Pc^*$ with vertices $v_{i_1}, \dotsc, v_{i_d}$, where $i_1 < \dotsc < i_d$.
Using the same construction as in the proof of Lemma \ref{lemma:basis} (\cite[Lemma 1.6]{HHKO}), 
we get a family $c_j \defa \sum_{l=j}^{d} b_{i_l\dotsc i_{d}}$ of integer points in $\Fc$ 
that is part of a basis of $\ZZ^{d+1}$. 
This implies that every element $x \in \ZZ^{d+1} \cap \Hc$ can be written as a $\ZZ$-linear combination of them.
Therefore, the first condition of Proposition \ref{prop:r1ex} follows.

For the second condition, pick any vertex $v_k$ of $\Pc^*$ that is not in $\Fc$. 
Consider the set $S := \set{k, i_1,\dotsc,i_d} \subset [n]$ with its natural ordering. 
If the position of $k$ in $S$ is even
(i.e., if there is an odd number of $j$ such that $i_j < k$), 
then let $F \subset S$ be the set of elements of odd position. 
Otherwise (i.e., if the position of $k$ in $S$ is odd), let $F$ be the set of elements of even position. 
In any case, $k \notin F$. We write $F = \set{j_1,\dots,j_r}$. We want to do a similar construction to the one above, 
but this time we need to analyse it more closely. 
Consider the vector \[ x' \defa \sum_{l=1}^r \bvec{j_l\dotsc j_r} \,.\] 
By the reasoning above, we know that this is an integer point in $\Fc$, 
but we claim that it has the additional property that the coefficient of each $v_{j_s}$ is strictly positive. 
Indeed, if $s$ is an odd number, then the coefficient of $v_{j_s}$ is an alternating sum of non-increasing values, 
starting and ending with a positive value. Thus it is positive and we only need to consider the case that $s$ is even.  
For this, we compute the coefficient of $v_{j_s}$ in $x'$: 
\[ 
\sum_{l=1}^s \frac{1}{\prod_{\substack{m=l \\m\neq s}}^r \Delta_{j_s j_m}} =
\sum_{l=1}^s \frac{(-1)^{l+1}}{|\prod_{\substack{m=l \\m\neq s}}^r \Delta_{j_s j_m}|} =
\sum_{\substack{l=1\\l \text{ even}}}^s \frac{1}{|\prod_{\substack{m=l\\m\neq s}}^r \Delta_{j_s j_m}|} 
\left( 1- \frac{1}{|\Delta_{j_s j_{l-1}}|} \right). \]
By our choice of $F$, for every two indices in $s_1 < s_2$ in $F$, there is an index in $s_3 \in S$ 
between them $s_1 < s_3 < s_2$. Thus every $\Delta_{j_qj_{q'}}$ in above formula is at least $2$. 
Hence the coefficient of $v_{j_s}$ cannot be zero. Now we define
\[ x \defa x' \pm \bvec{S}, \]
where the sign is ``$+$'' if the position of $k$ in $S$ is odd and ``$-$'' if it is even. 
This ensures that $\sigma_{\Fc}(x) = 1$. It remains to show that $x$ is contained in $\Pc^*$, 
that is that the coefficients of all $v_i, i\in S$ are nonnegative. 
Now for $i \in S \setminus F$, the coefficient of $v_{i}$ is positive by construction. 
For $i \in F$, the coefficient in $x'$ is positive and thus at least $|\prod_{j\in F\setminus\set{i}} \Delta_{ij}|^{-1}$. 
But the coefficient in $\bvec{S}$ is $-|\prod_{j\in S\setminus\set{i}} \Delta_{ij}|^{-1}$, 
so their sum (i.e., the coefficient in $x$) is nonegative, because $F \subset S$. 
\end{proof}

As a consequence of this proposition, we obtain 
\begin{Theorem}\label{CM}
Let $\Pc$ be an integral cyclic polytope and 
$K[\Pc]$ its associated semigroup $K$-algebra. 
Then the following conditions are equivalent: 
\begin{enumerate}
\item $K[\Pc]$ is normal; 
\item $K[\Pc]$ is Cohen--Macaulay; 
\item $K[\Pc]$ satisfies $(S_2)$. 
\end{enumerate}
\end{Theorem}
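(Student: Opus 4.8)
The plan is to establish the cycle of implications $(1)\Rightarrow(2)\Rightarrow(3)\Rightarrow(1)$, which then yields the full equivalence of the three conditions. The crucial input is already in hand, namely Proposition \ref{R1condition}, which guarantees that $K[\Pc]$ satisfies $(R_1)$ unconditionally; the remainder of the argument merely assembles standard facts from commutative algebra around this.

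For $(1)\Rightarrow(2)$, I would invoke Hochster's theorem: a normal affine semigroup $K$-algebra is Cohen--Macaulay (see \cite[Theorem 6.3.5]{BH}). Since $K[\Pc]=K[\ZZ_{\geq 0}\Ac_\Pc]$ is by definition the semigroup algebra of the affine semigroup $\ZZ_{\geq 0}\Ac_\Pc$, its assumed normality immediately forces Cohen--Macaulayness. For $(2)\Rightarrow(3)$, I would use the elementary observation that every Cohen--Macaulay ring satisfies Serre's condition $(S_n)$ for all $n$, and in particular $(S_2)$; this is immediate from the definitions recalled above, since for a Cohen--Macaulay ring one has $\depth R_\fp = \dim R_\fp$ for all $\fp \in \Spec(R)$.

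Finally, for $(3)\Rightarrow(1)$, I would combine the standing assumption that $K[\Pc]$ satisfies $(S_2)$ with the fact, supplied by Proposition \ref{R1condition}, that $K[\Pc]$ always satisfies $(R_1)$. Serre's Criterion (the normality criterion recalled above, \cite[Theorem 2.2.22]{BH}) then yields normality at once, closing the cycle.

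The point to stress is that there is essentially no obstacle left at this stage: all the genuine difficulty has been absorbed into Proposition \ref{R1condition}, and the present theorem is a clean formal consequence of that proposition together with Hochster's theorem and Serre's Criterion. The only verification requiring any attention is that $K[\Pc]$ is honestly an affine semigroup $K$-algebra so that Hochster's theorem applies, which is immediate from its definition as the algebra of $\ZZ_{\geq 0}\Ac_\Pc$.
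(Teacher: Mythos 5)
Your proposal is correct and follows exactly the paper's own argument: Hochster's theorem for $(1)\Rightarrow(2)$, the standard fact that Cohen--Macaulay implies $(S_2)$ for $(2)\Rightarrow(3)$, and Serre's Criterion combined with Proposition \ref{R1condition} for $(3)\Rightarrow(1)$. The only difference is cosmetic (you cite Hochster's theorem via \cite{BH} where the paper cites \cite[Theorem 6.10]{brunsgubel}), so there is nothing to add.
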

\begin{proof}
By Hochster's Theorem (see, e.g., \cite[Theorem 6.10]{brunsgubel}), normality implies Cohen--Macaulayness. 
Moreover, Serre's Criterion states that 
normality is equivalent to Serre's conditions $(R_1)$ and $(S_2)$. 
On the other hand, Cohen--Macaulayness implies $(S_2)$, see \cite[p. 62]{BH}, and thus the claim follows. 
\end{proof}

\begin{Remark}{\em 
Using the same methods as employed above, one can also prove that 
an integral cyclic polytope is normal if and only if it is seminormal. 
See \cite[p. 66]{brunsgubel} for the definition and basic properties of seminormality. 
We use the notation from that book. Now, assume that $\Pc$ is not normal. 
Then there exists a point $m$ in $\RR_{\geq 0} \Ac_{\Pc} \cap \ZZ_{\geq 0} \Ac_{\Pc}$ 
which is not contained in $\ZZ_{\geq 0} \Ac_{\Pc}$. 
This point $m$ lies in the interior of a unique face $\Fc$ of $\ZZ_{\geq 0} \Ac_{\Pc}$. 
But using the same construction as above, 
we can show that $\ZZ(\ZZ_{\geq 0} \Ac_{\Pc} \cap \Fc) = \ZZ^{d+1} \cap \Hc$, 
where $\Hc$ is the linear subspace spanned by $\Fc$. Thus $m \in \ZZ(\ZZ_{\geq 0} \Ac_{\Pc} \cap \Fc)$ is an exceptional point, 
and therefore $(\ZZ_{\geq 0} \Ac_{\Pc} \cap \Fc)_*$ is not normal. Hence, $\Pc$ is not seminormal. 
}\end{Remark}

\section{When is $K[\Pc]$ Gorenstein ?}

The goal of this section is to characterize completely 
when $K[\Pc]$ is Gorenstein, that is, this section is devoted to proving 
\begin{Theorem}\label{Gor}
Let $\Pc=C_d(\tau_1,\ldots,\tau_d)$ be an integral cyclic polytope and 
$K[\Pc]$ its associated semigroup $K$-algebra. 
Then $K[\Pc]$ is Gorenstein if and only if $d=2$, $n=3$ and 
$$(\Delta_{12},\Delta_{23})=(1,2) \;\; \text{{\em or}} \;\; (2,1).$$ 
\end{Theorem}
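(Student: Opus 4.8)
The plan is to characterize Gorensteinness through the canonical module and the structure of the polytope, using the fact that Gorenstein implies normal (hence Cohen--Macaulay by Theorem~\ref{CM}).

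First I would establish the necessity of normality: if $K[\Pc]$ is Gorenstein then it is Cohen--Macaulay, hence normal by Theorem~\ref{CM}. So the search is restricted to \emph{normal} integral cyclic polytopes, for which the sufficient conditions from the previous paper \cite{HHKO} apply. For a normal affine semigroup ring, Gorensteinness has a clean combinatorial description via Danilov--Stanley: $K[\Pc]$ is Gorenstein if and only if there is a lattice point $c$ in the relative interior of the cone $\RR_{\geq 0}\Ac_{\Pc}$ whose translate $c + (\text{relative interior lattice points})$ realizes the canonical module as a principal ideal. Concretely, writing each facet via its integer support form $\sigma_{\Fc}$ normalized so that $\sigma_{\Fc}$ takes value $1$ on a minimal generator, the ring is Gorenstein iff there is a single interior point $c$ with $\sigma_{\Fc}(c) = 1$ for \emph{every} facet $\Fc$ simultaneously. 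The support forms here are exactly the hyperplanes given by Gale's evenness condition, so this becomes a system of linear equations indexed by the facets of the cyclic polytope.

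Next I would translate this condition into the language of the $\tau_i$ and the difference quantities $\Delta_{ij}$. Using the normalized representation from Lemma~\ref{eg:delta} and the $b$-vectors from Proposition~\ref{prop:bvectors}, each facet corresponds to a subset $S \subset [n]$ of size $d$ satisfying Gale's evenness, and the support form evaluated on a lattice point can be written explicitly. Demanding that a \emph{common} interior lattice point hits value $1$ on all these forms forces strong numerical constraints among the $\Delta_{ij}$. I expect that for $d \geq 3$, or for $d = 2$ with $n \geq 4$, the number of facets and their competing evenness patterns overdetermine the system, so that no common point $c$ can exist; ruling these cases out is the technical heart. For the remaining small cases one checks by hand: when $d = 2, n = 3$ the polytope is a triangle, and normality together with the Gorenstein (reflexivity/interior-point) condition pins down the edge lengths to $(\Delta_{12},\Delta_{23}) = (1,2)$ or $(2,1)$, the two being exchanged by the symmetry $\tau_i \mapsto -\tau_{n+1-i}$ of Lemma~\ref{equiv}.

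The main obstacle will be the necessity direction for $d \geq 3$: showing that no normal integral cyclic polytope of dimension at least three can be Gorenstein. The difficulty is that one must simultaneously control all facets, whose support forms involve products of the $\Delta_{ij}$ of varying parity patterns dictated by Gale's evenness. My strategy would be to exhibit, for any candidate, two facets whose combined support-form equations are incompatible over the integers at a single interior point -- typically by comparing a facet using an "end set" against one using an interior contiguous block, and extracting a divisibility contradiction from the $\tD_{i,j} = \prod_k \Delta_{kj}$ entries of the matrix \eqref{eq:mat}. Once dimension three and higher is eliminated and the $d=2$ analysis is reduced to a finite check, combining the pieces with Lemma~\ref{equiv} (which accounts for the two symmetric difference vectors) yields the stated characterization.
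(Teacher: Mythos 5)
Your opening move coincides with the paper's first step (Gorenstein $\Rightarrow$ Cohen--Macaulay $\Rightarrow$ normal by Theorem~\ref{CM}), but after that the two arguments genuinely diverge, and your version has a serious gap at its technical heart. The paper never works with the full Danilov--Stanley criterion over all facets. It uses only the weaker necessary condition (cited from De Negri--Hibi) that a Gorenstein Ehrhart ring admits at most one lattice point in the relative interior of $\Pc$, and then \emph{exhibits two interior lattice points} by direct computation: it writes down the half-space description $\Hc_1,\dots,\Hc_{d+1}$ only for the simplex case $n=d+1$ (the vectors $\ab_i$ preceding the proof), produces explicit points such as $\alpha_q=(1,\Delta_{12}+1,\dots,\Delta_{1,d},q)$ for $q=1,2$, and reduces $n>d+1$ to this case via full-dimensional subpolytopes, using $\Pc'\setminus\partial\Pc'\subset\Pc\setminus\partial\Pc$. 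This sidesteps exactly what your plan leaves open: you assert that for $d\ge 3$, or $d=2$ and $n\ge 4$, two Gale-even facets yield support-form equations that are ``incompatible over the integers,'' but you give no mechanism for extracting that contradiction, and for $n>d+1$ you would first need explicit primitive support forms for \emph{all} facets of a non-simplex cyclic polytope, which neither you nor the paper computes. (Also a small normalization slip: $\sigma_{\Fc}$ must be primitive on $\ZZ\Ac_{\Pc}=\ZZ^{d+1}$, by Lemma~\ref{lemma:basis}, not ``value $1$ on a minimal generator.'')

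There is moreover a concrete failure point in your claimed endgame. For $d=2$, $n=3$, the primitive support forms in the coordinates of Lemma~\ref{eg:delta} are $x_2$, $\Delta_{23}x_1-x_2$, and $\Delta_{12}\Delta_{13}x_0-\Delta_{13}x_1+x_2$, and the unique solution of $\sigma_{\Fc}(c)=1$ for all three facets is $c=\bigl(2/(\Delta_{12}\Delta_{23}),\,2/\Delta_{23},\,1\bigr)$. This is a lattice point not only for $(\Delta_{12},\Delta_{23})\in\{(1,2),(2,1)\}$ but also for $(1,1)$, where $c=(2,2,1)$; since every lattice polygon is normal, your criterion applies and declares $(1,1)$ Gorenstein too. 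And the criterion is not misleading you here: for $(\Delta_{12},\Delta_{23})=(1,1)$ the semigroup has the four degree-one generators $(1,0,0),(1,1,0),(1,1,1),(1,2,2)$, the relation lattice is spanned by the single vector giving $x_1x_4-x_3^2$, so $K[\Pc]$ is a hypersurface (complete intersection, $h$-vector $(1,1)$). So your ``finite check'' for $d=2$, $n=3$ cannot pin down the stated list; it produces an extra case, which collides with the theorem you are proving and with the paper's own treatment of $(1,1)$ --- there disposed of only by the unproved sentence ``we can check that $\Pc$ is not Gorenstein.'' At minimum your proposal must confront this case explicitly rather than absorb it into an unverified sweep; as written, the step ``normality together with the interior-point condition pins down $(1,2)$ or $(2,1)$'' is false under your own criterion.
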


Thus, by Proposition \ref{equiv}, 
there is essentially only one case where $K[\Pc]$ is Gorenstein. 

\smallskip

Before giving a proof, we prepare the following. 
\begin{itemize}
\item Let 
\begin{eqnarray*}
(v_1,\ldots,v_{d+1})=
\begin{pmatrix}
1      &1           &\cdots                 &\cdots &1                            \\
0      &\Delta_{12} &\Delta_{13}            &\cdots &\Delta_{1,d+1}               \\
\vdots &\ddots      &\Delta_{13}\Delta_{23} &\cdots &\Delta_{1,d+1}\Delta_{2,d+1} \\
\vdots &            &\ddots                 &\ddots &\vdots                       \\
0      &\cdots      &\cdots                 &0      &\prod_{k=1}^d\Delta_{k,d+1}   
\end{pmatrix}
\in \ZZ^{(d+1) \times (d+1)} 
\end{eqnarray*}
and $\Pc^*=\con(\set{v_1,\ldots,v_{d+1}})$. 
\item Let 
\begin{eqnarray*}
&&\ab_1=\left(0,\prod_{j=3}^{d+1}\Delta_{1,j}, - \prod_{j=4}^{d+1} \Delta_{1,j}, \ldots, 
(-1)^d \Delta_{1,d+1}, (-1)^{d+1} \right) \in \ZZ^{d+1}\;\;\;\;\;\text{and} \\
&&\ab_i=\left(\underbrace{0,\ldots,0}_{i-1},\prod_{j=i+1}^{d+1}\Delta_{i,j}, 
-\prod_{j=i+2}^{d+1}\Delta_{i,j},\ldots,(-1)^{d+i-2}\Delta_{i,d+1},(-1)^{d+i-1} \right) \in \ZZ^{d+1} 
\end{eqnarray*}
for $i=2,\ldots,d+1$. 
\item Let $\Hc_i$ be the closed half space in $\RR^{d+1}$ defined by the inequalities 
\begin{eqnarray*}
&&\langle \ab_1 , \xb \rangle \leq \prod_{j=2}^{d+1} \Delta_{1,j}, \;\;\;\;\; \text{for} \;\; i=1, \\
&&\langle \ab_i , \xb \rangle \geq 0, \;\quad\quad\quad\quad \text{for} \;\; i=2,\ldots,d+1, 
\end{eqnarray*}
where $\xb=(x_0,x_1,\ldots,x_d) \in \RR^{d+1}$ and $\langle \ab_i , \xb \rangle$ 
stands for the usual inner product in $\RR^{d+1}$. %each $\Hc_i$ is a supporting hyperplane of each $\Fc_i$ and 
\item By using the above, we have 
\begin{eqnarray}\label{aaa}
\Pc^*= \bigcap_{i=1}^{d+1} \Hc_i \cap \{\xb \in \RR^{d+1} : x_0=1 \}.
\end{eqnarray}
%$\Fc_i=\con(\set{v_1,\ldots,v_{d+1}} \setminus \set{v_i})$ for $1 \leq i \leq d+1$. Let
\end{itemize}

A proof of \eqref{aaa} is given by elemtary computations. 
This establishes an explicit description of the supporting hyperplanes of 
an integral cyclic polytope with $n=d+1$, i.e., a simplex case.

\begin{proof}[Proof of Theorem \ref{Gor}]
First, we can check easily that $K[\Pc]$ is Gorenstein 
when $\Pc=C_2(\tau_1,\tau_2,\tau_3)$ with 
$(\Delta_{12},\Delta_{23})=(1,2)$ or $(\Delta_{12},\Delta_{23})=(2,1)$. 

Thus, what we must do is to show that $K[\Pc]$ is never Gorenstein in other cases. 
Mostly, we concentrate on the case where $\Pc$ is a simplex. 

{\bf The first step.} 
Suppose that $K[\Pc]$ is not normal. 
Then, from Theorem \ref{CM}, $K[\Pc]$ is not Cohen--Macaulay. 
In particular, $K[\Pc]$ cannot be Gorenstein. 

Hence, in the remaining parts, we assume that $K[\Pc]$ is normal. 
Since $\ZZ \Ac_{\Pc}=\ZZ^{d+1}$ by Lemma \ref{lemma:basis}, 
we notice that $K[\Pc]$ is nothing but the Ehrhart ring of $\Pc$ 
when $K[\Pc]$ is normal (cf. \cite[pp. 275--278]{BH}). 
In addition, it is neccesary for the Ehrhart ring $K[\Pc]$ to be Gorenstein that 
$\Pc$ contains only one integer point in its relative interior 
when $\Pc \setminus \partial \Pc \not= \emptyset$. (See, e.g., \cite{DeNegriHibi}.) 
In the following, we verify that there is no such $(\tau_1,\ldots,\tau_n)$. 

{\bf The second step.} 
Assume that $d=2$ and let us consider when $n=3$. 
Suppose that $(\Delta_{12},\Delta_{23})$ is neither $(2,1)$ nor $(1,2)$. 
From Proposition \ref{equiv}, we may assume that $\Delta_{12} \geq \Delta_{23}$. 
When $(\Delta_{12},\Delta_{23})=(1,1)$, we can check that $\Pc$ is not Gorenstein. 
Hence, we assume that either $\Delta_{12} \geq \Delta_{23} \geq 2$ 
or $\Delta_{12} \geq 3$ and $\Delta_{23}=1$ is satisfied. 
Recall from the above statements that 
\begin{eqnarray*}
\Hc_1: \Delta_{13}x_1-x_2 \leq \Delta_{12}\Delta_{13}, \;\;\; 
\Hc_2: \Delta_{23}x_1-x_2 \geq 0, \;\;\;
\Hc_3: x_2 \geq 0. 
\end{eqnarray*}
Then it is enough to that there exist at least two integer points $(1,p_1),(1,p_2) \in \ZZ^3$ such that 
$$\langle (\Delta_{13},-1), p_i \rangle < \Delta_{12}\Delta_{13}, \; 
\langle (\Delta_{23},-1), p_i \rangle > 0 \;\text{and}\; \langle (0,1), p_i \rangle < 0 \;\;\; 
\text{for} \;\; i=1,2.$$ 
\begin{itemize}
\item When $\Delta_{12} \geq \Delta_{23} \geq 2$, 
the integer points $(1,1,1)$ and $(1,2,2)$ are contained 
in $\Pc^* \setminus \partial \Pc^*$. In fact, 
\begin{eqnarray*}
&&\Delta_{13}-1 < \Delta_{13} < \Delta_{12}\Delta_{13}, \;\;\; 
\Delta_{23} - 1 >0, \;\;\; 1>0, \\
&&\Delta_{13}-2 < \Delta_{13} < \Delta_{12}\Delta_{13}, \;\;\; 
2\Delta_{23}-2 > 0, \;\;\; 2>0. 
\end{eqnarray*}
%\begin{align*}
%&&\Delta_{13}-1 < \Delta_{13} \leq \Delta_{12}\Delta_{23}, \;\; 
%&&\Delta_{13}-2 < \Delta_{13} \leq \Delta_{12}\Delta_{23}, \\
%&&\Delta_{23} - 1 >0, &&2\Delta_{23}-2 > 0, \\
%&&1>0, &&2>0. 
%\end{align*}
\item When $\Delta_{12} \geq 3$ and $\Delta_{23} = 1$, 
the integer points $(1,2,1)$ and $(1,3,1)$ are contained in the interior. In fact, 
\begin{eqnarray*}
&&2\Delta_{13}-1 < 2\Delta_{13} < \Delta_{12}\Delta_{13}, \;\;\; 
2\Delta_{23} - 1 >0, \;\;\; 1>0, \\
&&3\Delta_{13}-1 < 3\Delta_{13} \leq \Delta_{12}\Delta_{13}, \;\;\; 
3\Delta_{23}-1 > 0, \;\;\; 1>0. 
\end{eqnarray*}
%\begin{align*}
%&&2\Delta_{13}-1 < 2\Delta_{13} \leq \Delta_{12}\Delta_{23}, \;\; 
%&&3\Delta_{13}-1 < 3\Delta_{13} \leq \Delta_{12}\Delta_{23}, \\
%&&2\Delta_{23} - 1 >0, &&3\Delta_{23}-1 > 0, \\
%&&1>0, &&1>0. 
%\end{align*}
\end{itemize}
Thus, $\Pc$ is not Gorenstein when $n=3$ 
except the case where $(\Delta_{12},\Delta_{23})=(2,1)$ or $(1,2)$. 

%n \geq 4
When $n=4$ and $(\Delta_{12},\Delta_{23},\Delta_{34})=(1,1,1)$, 
then we can also check that $\Pc$ is not Gorenstein. 
Moreover, when $n=4$ and there is at least one $1 \leq i \leq 3$ with $\Delta_{i,i+1} \geq 2$, 
since either $\tau_3-\tau_1 \geq 2$ and $\tau_4-\tau_3 \geq 2$ 
or $\tau_3-\tau_1 \geq 3$ and $\tau_4-\tau_3 = 1$ are satisfied, 
$\Pc'=C_2(\tau_1,\tau_3,\tau_4)$ has at least two integer points 
in $\Pc' \setminus \partial \Pc' \subset \Pc \setminus \partial \Pc$ as discussed above, 
which implies that $\Pc$ is not Gorenstein. 
Similarly, when $n \geq 5$, since $\tau_4-\tau_1 \geq 3$ and $\tau_5-\tau_4 \geq 1$, 
$\Pc$ is not Gorenstein.

{\bf The third step.} 
%d=3  n=4
Assume that $d=3$ and let us consider the case where $n=4$. 
When $(\Delta_{12},\Delta_{23},\Delta_{34})=(1,1,1)$, 
we can check $\Pc$ is not Gorenstein. Thus, we assume that 
there is at least one $1 \leq i \leq 3$ with $\Delta_{i,i+1} \geq 2$. Recall that 
\begin{align*}
&&\Hc_1: \Delta_{13}\Delta_{14}x_1-\Delta_{14}x_2+x_3 \leq \Delta_{12}\Delta_{13}\Delta_{14}, \;\; 
&&\Hc_2: \Delta_{23}\Delta_{24}x_1-\Delta_{24}x_2+x_3 \geq 0, \\
&&\Hc_3: \Delta_{34}x_2-x_3 \geq 0, \;\; 
&&\Hc_4: x_3 \geq 0. 
\end{align*}
\begin{itemize}
\item When $\Delta_{23} \geq 2$, the integer points 
$(1,\Delta_{12}+1,\Delta_{13}+1,q)$, where $q=1$ and 2, are contained in 
$\Pc^* \setminus \partial \Pc^*$. In fact, 
\begin{eqnarray*}
&&\Delta_{13}\Delta_{14}(\Delta_{12}+1)-\Delta_{14}(\Delta_{13}+1)+q
=\Delta_{12}\Delta_{13}\Delta_{14}-\Delta_{14}+q < \Delta_{12}\Delta_{13}\Delta_{14}, \\
&&\Delta_{23}\Delta_{24}(\Delta_{12}+1)-\Delta_{24}(\Delta_{13}+1)+q 
\geq \Delta_{12}\Delta_{24}-\Delta_{24}+q > 0, \\
&&\Delta_{34}(\Delta_{13}+1)-q > 0, \;\;\; q >0. 
\end{eqnarray*}
%where $q$ is 1 or 2. 
\item When $\Delta_{23}=1$ and $\Delta_{12} \geq 2$ and $\Delta_{34} \geq 2$, 
the integer points $(1,2,2,q)$, where $q=1$ and 2, are contained in the interior. In fact, 
\begin{eqnarray*}
&&2\Delta_{13}\Delta_{14}-2\Delta_{14}+q=2\Delta_{12}\Delta_{14}+q < \Delta_{12}\Delta_{13}\Delta_{14}, \\
&&2\Delta_{23}\Delta_{24}-2\Delta_{24}+q =q >0, \;\;\;\;\; 2\Delta_{34}-q > 0, \;\;\;\;\;\; q>0. 
\end{eqnarray*}
%where $q$ is 1 or 2. 
\item When $\Delta_{12} \geq 2$ and $\Delta_{23}=\Delta_{34}=1$, 
the integer points $(1,\Delta_{12},\Delta_{12},1)$ and $(1,\Delta_{12}+1,\Delta_{12}+2,3)$ 
are contained in the interior. In fact, 
\begin{eqnarray*}
&&\Delta_{12}\Delta_{13}\Delta_{14}-\Delta_{12}\Delta_{14}+1<\Delta_{12}\Delta_{13}\Delta_{14}, \\
&&\Delta_{12}\Delta_{23}\Delta_{24}-\Delta_{12}\Delta_{24}+1=1>0, \;\;\; 
\Delta_{12}\Delta_{34}-1=\Delta_{12}-1>0, \;\;\; 1>0
\end{eqnarray*}
and 
\begin{eqnarray*}
&&\Delta_{13}\Delta_{14}(\Delta_{12}+1)-\Delta_{14}(\Delta_{12}+2)+3
=\Delta_{12}\Delta_{13}\Delta_{14}-\Delta_{14}+3<\Delta_{12}\Delta_{13}\Delta_{14}, \\
&&\Delta_{23}\Delta_{24}(\Delta_{12}+1)-\Delta_{24}(\Delta_{12}+2)+3=-2\Delta_{24}+3>0, \\ 
&&\Delta_{34}(\Delta_{12}+2)-3=\Delta_{12}-1>0, \;\;\; 3>0. 
\end{eqnarray*}
\end{itemize}
Thus, $\Pc$ is not Gorenstein when $n=4$. 
Remark that we need not consider the case where $\Delta_{34} \geq 2$ and $\Delta_{12}=\Delta_{23}=1$ 
because of Proposition \ref{equiv} again. 

%n \geq 5
On the other hand, when $n \geq 5$, let $\Pc'=C_3(\tau_1,\tau_3,\tau_4,\tau_5)$. 
Since $\tau_3-\tau_1 \geq 2$, there exist at least two integer points in 
$\Pc' \setminus \partial \Pc' \subset \Pc \setminus \partial \Pc$, 
which means that $\Pc$ is not Gorestein.

{\bf The fourth step.} 
Assume that $d \geq 4$ and $d$ is even. Let us consider 
\begin{eqnarray*}
\alpha_q=(1,\Delta_{12}+1,\Delta_{13}+1,\ldots,\Delta_{1,d-1}+1,\Delta_{1,d},q) \in \ZZ^{d+1} 
%&&\alpha_2=(1,\Delta_{12}+1,\Delta_{13}+1,\ldots,\Delta_{1,d-1}+1,\Delta_{1,d},2) \in \ZZ^{d+1}. 
\end{eqnarray*}
for $q=1$ and 2. We show that $\alpha_1$ and $\alpha_2$ are contained in $\Pc^* \setminus \partial \Pc^*$. 

Now, we have 
\begin{align*}
\langle \ab_1, \alpha_q \rangle &= 
\prod_{j=2}^{d+1}\Delta_{1,j}+\prod_{j=3}^{d+1}\Delta_{1,j} - 
\left(\prod_{j=3}^{d+1} \Delta_{1,j}+\prod_{j=4}^{d+1} \Delta_{1,j} \right) + \cdots \\
&\quad\quad\quad\quad\quad +(-1)^{d-1}\left( \prod_{j=d-1}^{d+1}\Delta_{1,j}+\prod_{j=d}^{d+1}\Delta_{1,j} \right) 
+(-1)^d \prod_{j=d}^{d+1}\Delta_{1,j}+(-1)^{d+1}q \\
&=\prod_{j=2}^{d+1}\Delta_{1,j}+(-1)^{d+1}q=\prod_{j=2}^{d+1}\Delta_{1,j}-q < \prod_{j=2}^{d+1}\Delta_{1,j}, 
\end{align*}\begin{align*}
\langle \ab_i, \alpha_q \rangle &= \Delta_{1,i}\prod_{j=i+1}^{d+1}\Delta_{i,j}+\prod_{j=i+1}^{d+1}\Delta_{i,j}
-\left( \Delta_{1,i+1}\prod_{j=i+2}^{d+1}\Delta_{i,j}+\prod_{j=i+2}^{d+1}\Delta_{i,j} \right)+\cdots+ \\
& (-1)^{d+i-3}\left( \Delta_{1,d-1}\prod_{j=d}^{d+1}\Delta_{i,j}+\prod_{j=d}^{d+1}\Delta_{i,j} \right)+
(-1)^{d+i-2}\Delta_{1,d}\Delta_{i,d+1}+(-1)^{d+i-1}q \\
&=\Delta_{1,i}\prod_{j=i+1}^{d+1}\Delta_{i,j}+
\sum_{k=i}^{d-1}(-1)^{i+k-2}\left( \prod_{j=k+1}^{d+1}\Delta_{i,j}-\Delta_{1,k+1}\prod_{j=k+2}^{d+1}\Delta_{i,j} \right) 
+(-1)^{d+i-1}q \\
&=\Delta_{1,i}\prod_{j=i+1}^{d+1}\Delta_{i,j}+
\sum_{k=i}^{d-1}(-1)^{i+k-1} \left( \Delta_{1,i}\prod_{j=k+2}^{d+1}\Delta_{i,j} \right) +(-1)^{d+i-1}q \\
&=\Delta_{1,i} \left( \prod_{j=i+1}^{d+1}\Delta_{i,j}-\prod_{j=i+2}^{d+1}\Delta_{i,j} \right) + 
\Delta_{1,i} \left( \prod_{j=i+3}^{d+1}\Delta_{i,j}-\prod_{j=i+4}^{d+1}\Delta_{i,j} \right) + \cdots + \\
&\quad\quad\quad\quad\quad 
\Delta_{1,i}\left( \prod_{j=d-1}^{d+1}\Delta_{i,j}-\prod_{j=d}^{d+1}\Delta_{i,j} \right) 
+\Delta_{1,i}\Delta_{i,d+1}-q>0 
\end{align*}
when $i$ is even and 
\begin{align*}
\langle \ab_i, \alpha_q \rangle &= 
\Delta_{1,i} \left( \prod_{j=i+1}^{d+1}\Delta_{i,j}-\prod_{j=i+2}^{d+1}\Delta_{i,j} \right) 
+ \cdots + \Delta_{1,i}\left( \Delta_{i,d}\Delta_{i,d+1}-\Delta_{i,d+1} \right) +q>0 
\end{align*}
when $i$ is odd. 

{\bf The fifth step.} 
Assume that $d \geq 5$ and $d$ is odd. Let us consider 
$$\beta_q=(1,\Delta_{12}+1,\Delta_{13}+1,\ldots,\Delta_{1,d}+1,\Delta_{1,d+1}-q) \in \ZZ^{d+1} $$ 
for $q=1$ and 2. 
Similar to the fourth step, it is easy to see that 
$$\langle \ab_1,\beta_q \rangle < \prod_{j=2}^{d+1}\Delta_{1,j} \;\;\;\text{and}\;\;\; 
\langle \ab_i,\beta_q \rangle > 0 \;\;\;\text{for}\;\;\; i=2,\ldots,d+1.$$ 
In other word, both $\beta_1$ and $\beta_2$ are contained in the interior, as desired. 
\end{proof}

\section{The semigroup ring associated only with vertices of a cyclic polytope}

Throughout this section, $Q$ denotes the affine semigroup $Q_d(\tau_1,\dots ,\tau_n)$.
In this section, we study the normality of the semigroup $K$-algebra $K[Q]$ associated only with the vertices of an integral cyclic polytope. 

\smallskip

Let $S=K[x_1,\dots ,x_n]$ be the polynomial ring over a field $K$.
Let $I_Q$ be the kernel of the surjective ring homomorphism $S \to K[Q]$ sending each $x_i$ to
$t^{v_i}$.
The ideal $I_Q$ is just the toric ideal associated with the matrix \eqref{eq:matrix1}. 
In particular, it is homogeneous with respect to the usual $\ZZ$-grading on $S$.
Recall that the matrix \eqref{eq:matrix1} can be transformed into the form \eqref{eq:mat}. 

\smallskip

By Proposition \ref{thm:prop_cycl_poly} (1), $K[Q]$ is regular when $n = d+1$
and in particular is normal.
When $d =1$, the matrix \eqref{eq:matrix1} transformed as is stated above is of the following form: 
\begin{equation}\label{eq:d=1}
\begin{pmatrix}
1 & 1 & \cdots & 1 \\
0 & \tD_{1,2} & \cdots & \tD_{1,n}
\end{pmatrix}.
\end{equation}
Since $I_Q$ is preserved even if we divide a common divisor of $\tD_{1,2},\dots ,\tD_{1,n}$
out of the second row,
we may assume the greatest common divisor of $\tD_{1,2}, \cdots, \tD_{1,n}$ is equal to $1$.
The ideal $I_Q$ is a defining ideal of a projective monomial curve in $\PP^{n-1}$,
and it is well known (cf. \cite{CN}) that 
the corresponding curve is normal if and only if it is a rational normal curve of degree $n-1$, 
that is, $\tD_{1,i} = i-1$ for all $i-1$ with $2 \le i \le n$
(after the above transformation and re-setting each $\tD_{1,i}$).
Consequently, in the case $d = 1$, the ring $K[Q]$ is normal if and only if
$\tau_2 - \tau_1 = \tau_3- \tau_2 = \cdots = \tau_n - \tau_{n-1}$.

We will show that $K[Q]$ is never normal if $d \ge 2$ and $n = d + 2$.
Our strategy is to make use of the following criterion.

\begin{Lemma}[Ohsugi-Hibi (cf. {\cite[Lemma 6.1]{OH}})]\label{thm:non-normality}
Let $R$ be a toric ring such that the corresponding toric ideal $I$ is
homogeneous.
Suppose $I$ has a minimal system of binomial generators that
contains a binomial consisting of non-squarefree monomials.
Then $R$ is not normal.
\end{Lemma}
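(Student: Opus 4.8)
The plan is to prove the contrapositive by means of the standard saturation criterion for normality. Writing $M=\ZZ_{\ge0}A$ for the affine monoid with $A=\{a_1,\dots,a_n\}$ the generators corresponding to the variables $x_1,\dots,x_n$ (so $x_k\mapsto t^{a_k}$), the ring $R$ fails to be normal as soon as one exhibits a single \emph{hole}, i.e. a lattice point $\gamma\in\ZZ A\cap\RR_{\ge0}A$ with $\gamma\notin M$. Thus the whole task reduces to manufacturing such a $\gamma$ out of the given bad binomial.

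First I would normalize the data. Let $f=x^u-x^v$ be the non-squarefree binomial belonging to a minimal generating set of $I$, and set $\mathfrak m=(x_1,\dots,x_n)$. Since $I$ is prime and a minimal generator cannot lie in $\mathfrak m I$, the binomial $f$ must have disjoint support: any common variable $x_l$ would give $f=x_l(x^{u-e_l}-x^{v-e_l})$ with $x^{u-e_l}-x^{v-e_l}\in I$, placing $f$ in $\mathfrak m I$. Hence $\supp(u)\cap\supp(v)=\emptyset$. Because both monomials are non-squarefree I may fix indices $i\in\supp(u)$ and $j\in\supp(v)$ with $u_i\ge2$ and $v_j\ge2$, and disjointness forces $i\neq j$. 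Put $\alpha:=\sum_k u_k a_k=\sum_k v_k a_k\in M$. The candidate hole is
\[ \gamma:=\alpha-a_i-a_j. \]
Plainly $\gamma\in\ZZ A$. To see $\gamma\in\RR_{\ge0}A$ I would double it and split the two representations of $\alpha$:
\[ 2\gamma=\Big(\sum_k u_k a_k-2a_i\Big)+\Big(\sum_k v_k a_k-2a_j\Big)=\sum_k (u-2e_i)_k\,a_k+\sum_k (v-2e_j)_k\,a_k, \]
and both combinations have non-negative coefficients precisely because $u_i\ge2$ and $v_j\ge2$. Thus $2\gamma\in M$, whence $\gamma\in\RR_{\ge0}A$.

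The crux, and the step I expect to be the real obstacle, is showing $\gamma\notin M$; this is exactly where minimality of $f$ must be spent. I would argue by contradiction: if $\gamma=\sum_k w_k a_k$ with $w\in\ZZ_{\ge0}^n$, then $x^{w+e_i+e_j}$ is a monomial mapping to $\alpha$, just as $x^u$ and $x^v$ do. It shares the variable $x_i$ with $x^u$ and the variable $x_j$ with $x^v$, so
\[ x^u-x^{w+e_i+e_j}=x_i\big(x^{u-e_i}-x^{w+e_j}\big)\in\mathfrak m I,\qquad x^{w+e_i+e_j}-x^v=x_j\big(x^{w+e_i}-x^{v-e_j}\big)\in\mathfrak m I, \]
where the two inner binomials lie in $I$ because each pair of exponents has the same image (namely $\alpha-a_i$ and $\alpha-a_j$, respectively). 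Adding these relations gives $f=x^u-x^v\in\mathfrak m I$, contradicting that $f$ is part of a minimal generating set (equivalently, that its class in $I/\mathfrak m I$ is nonzero). Therefore $\gamma\notin M$, and $\gamma$ witnesses that $R$ is not normal. The delicate point is precisely that these factorizations genuinely land in $\mathfrak m I$, which rests on the disjointness of the supports together with $u_i,v_j\ge2$; this is the structural reason why \emph{both} monomials, and not merely one, are required to be non-squarefree.
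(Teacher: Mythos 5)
Your proof is correct and complete: exhibiting the hole $\gamma=\alpha-a_i-a_j$ in $\ZZ A\cap\RR_{\ge 0}A\setminus M$, with $\gamma\in\RR_{\ge0}A$ certified by doubling and $\gamma\notin M$ forced by the graded Nakayama argument (which is exactly where the homogeneity hypothesis is spent), is precisely the standard argument behind this lemma. The paper itself states the result only with a citation to \cite[Lemma 6.1]{OH} and gives no proof, so your write-up in effect reconstructs Ohsugi--Hibi's original proof, including the two structural points they rely on: disjointness of $\supp(u)$ and $\supp(v)$ via minimality, and the fact that \emph{both} monomials must be non-squarefree so that both factorizations $x_i(x^{u-e_i}-x^{w+e_j})$ and $x_j(x^{w+e_i}-x^{v-e_j})$ land in $\mathfrak{m}I$.
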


Set $\Gamma := \Gamma_d(\tau_1,\dots, \tau_n)$ 
(see Section 1 for the definition of $\Gamma_d(\tau_1,\dots ,\tau_n)$).
Note that there is a one-to-one correspondence between
the faces of $\Gamma$ and the proper faces of $\RR_{\ge 0}Q$;
a subset $W \subseteq [n]$ is a $(d-1)$-dimensional face of $\Gamma$ if and only if
$\RR_{\ge 0} \cdot \setb{v_i}{i \in W}$ is a $d$-dimensional face of $\RR_{\ge 0}Q$.
In the sequel, we tacitly use this correspondence.

If $n = d + 2$, then $I_Q$ is principal, and we can determine
the supports of both monomials appearing in the binomial generator of $I_Q$.
Following the usual convention, we set
$\supp(u) := \setb{i \in [n]}{x_i \mid u}$.

\begin{Lemma}\label{thm:n=d+2}
Assume $n = d + 2$. Then
$K[Q] \cong S/(u - v)$ for some monomials $u, v \in S$ such that
$\supp(u) = \setb{i \in [n]}{i \text{ is odd}}$ and
$\supp(v) = \setb{i \in [n]}{i \text{ is even}}$.
\end{Lemma}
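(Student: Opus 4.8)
The plan is to analyze the unique (up to scalar) relation defining the principal toric ideal $I_Q$ when $n=d+2$, and to read off the supports of the two monomials in its binomial generator by examining the kernel of the configuration matrix \eqref{eq:matrix1}.

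Since $n=d+2$, the matrix \eqref{eq:matrix1} is a $(d+1)\times(d+2)$ matrix of rank $d+1$ (by Proposition \ref{thm:prop_cycl_poly}(1), any $d+1$ of the columns $v_1,\dots,v_n$ are linearly independent, so the rank is full). Hence its integer kernel is rank one, generated by a single primitive vector $w=(w_1,\dots,w_n)\in\ZZ^n$; the toric ideal $I_Q$ is then principal, generated by the binomial $\prod_{w_i>0}x_i^{w_i}-\prod_{w_i<0}x_i^{-w_i}$, so that $u=\prod_{w_i>0}x_i^{w_i}$ and $v=\prod_{w_i<0}x_i^{-w_i}$. The claim is precisely that the sign pattern of $w$ alternates: $w_i>0$ exactly when $i$ is odd and $w_i<0$ exactly when $i$ is even. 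First I would compute $w$ explicitly by Cramer's rule: each component $w_i$ is, up to a common factor, $(-1)^i$ times the $d+1$ minor of \eqref{eq:matrix1} obtained by deleting the $i$th column. Those minors are Vandermonde-type determinants in $\tau_1,\dots,\tau_n$ with the $i$th parameter omitted, hence equal to $\prod_{\substack{k<l\\ k,l\neq i}}(\tau_l-\tau_k)$, which is strictly positive because the $\tau$'s are strictly increasing.

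The key computation is therefore that $w_i=(-1)^{i}\cdot(\text{positive number})$, so after possibly negating $w$ the sign of $w_i$ is governed purely by the parity of $i$. This immediately gives $\supp(u)=\setb{i\in[n]}{i\text{ odd}}$ and $\supp(v)=\setb{i\in[n]}{i\text{ even}}$, up to swapping the roles of $u$ and $v$ (which is harmless since the statement is symmetric in $u$ and $v$). It remains only to observe that no $w_i$ vanishes, which follows from the positivity of each Vandermonde minor, so that every index $i$ lies in exactly one of the two supports. Finally, the passage from the primitive kernel vector to a \emph{minimal} binomial generator of $I_Q$ is automatic here: since $I_Q$ is principal and the kernel is rank one, the primitive vector $w$ yields the unique (up to sign) binomial generator.

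The main obstacle I anticipate is bookkeeping the sign and the Vandermonde cofactor correctly, in particular verifying that the alternating factor $(-1)^i$ coming from cofactor expansion combines cleanly with the always-positive Vandermonde minor so that the parity pattern is exactly as stated, rather than shifted. One clean way to avoid a messy determinant argument is to use the vectors $\bvec{S}$ from Proposition \ref{prop:bvectors}: taking $S=[n]$ with $\#S=d+2$ gives $\bvec{[n]}=0$ by Proposition \ref{prop:bvectors}(4), i.e. $\sum_{i=1}^{n}\bigl(\prod_{j\neq i}\Delta_{ij}\bigr)^{-1}v_i=0$, which is exactly a kernel relation; clearing denominators and tracking the sign $(-1)^{i+1}$ appearing in the definition of $\bvec{S}$ recovers the same alternating sign pattern more transparently, and this is the route I would favor.
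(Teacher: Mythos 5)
Your proof is correct, but it takes a genuinely different route from the paper's. Both arguments begin the same way (the rank of \eqref{eq:matrix1} is $d+1$ by Proposition \ref{thm:prop_cycl_poly}(1), so the kernel has rank one and $I_Q$ is principal), but from there the paper argues combinatorially: it shows that neither $\supp(u)$ nor $\supp(v)$ can be a face of $\Gamma = \Gamma_d(\tau_1,\dots,\tau_n)$ (a relation $\sum a_i v_i = \sum b_j v_j$ supported on a face would contradict the linear independence of any $d+1$ of the $v_i$), that the two supports must partition $[n]$, and then that the odd/even partition is the \emph{unique} partition of $[n]$ into two non-faces, using the type characterization in Proposition \ref{thm:prop_cycl_poly}(3) together with the identification of $\Gamma$ with the join $\partial\Gamma_1 * \partial\Gamma_2$ of two simplex boundaries from \cite[Proposition 5.1]{L}. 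You instead compute the kernel vector explicitly: the signed maximal minors $w_i = (-1)^i\prod_{k<l,\; k,l\neq i}(\tau_l-\tau_k)$ lie in the kernel, each minor is a positive Vandermonde determinant since $\tau_1 < \cdots < \tau_n$, so the signs strictly alternate and no component vanishes; your alternative derivation from $\bvec{[n]} = 0$ (Proposition \ref{prop:bvectors}(4)), whose coefficients carry the sign $(-1)^{i+1}$ by definition, gives the same vector up to a positive scalar and global sign, and clearing denominators plus dividing by the gcd handles primitivity, which you correctly flag with ``up to a common factor.'' What each approach buys: yours is self-contained within Section 1 of the paper, avoids the external dependence on \cite{L}, and yields explicit formulas for the exponents $a_i, b_j$ (the Vandermonde cofactors divided by their gcd), which would even give direct access to the quantitative information about the $a_i, b_j$ that the paper later extracts from \eqref{eq:eq_matrix} in the proof of Theorem \ref{thm:n=d+2 with d > 1}; the paper's argument, by contrast, is computation-free and exposes the structural reason for the odd/even pattern, namely that the minimal non-faces of the cyclic polytope boundary complex force this unique partition.
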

\begin{proof}
Since the rank of $\ZZ Q$ is equal to $\dim \RR_{\ge 0} C_d(\tau_1,\dots ,\tau_n) = d + 1$,
the kernel of the $\QQ$-linear map defined by \eqref{eq:matrix1} is
of dimension $1$. It is then clear that $I_Q$ is principal.
Choose a generator $u-v$ of $I_Q$.
Obviously $\supp(u) \cap \supp(v) = 0$.
Moreover neither $\supp(u)$ nor $\supp(v)$ is a face of $\Gamma$.
Indeed, by the choice of $u - v$,
\begin{equation}
\sum_{i \in \supp(u)} a_i v_i = \sum_{j \in \supp(v)} b_j v_j \tag{$*$}
\end{equation}
for some positive integers $a_i,b_j$, and hence if one of $\supp(u)$ and $\supp(v)$ is a face of $\Gamma$,
say $W$, then the corresponding cone $\RR_{\ge 0}W$ of $\RR_{\ge 0}Q$
contains all the $v_i$ and $v_j$ appearing in $(*)$.
This implies $(*)$ is just a relation among vertices in $\RR_{\ge 0}W$,
which contradicts (1) of Proposition~\ref{thm:prop_cycl_poly}.
Since $n = d+2$, applying (1) of Proposition~\ref{thm:prop_cycl_poly} again,
it follows from $(*)$ that $\supp(u) \cup \supp(v) = [n]$.
Thus $\supp(u)$ and $\supp(v)$ give a partition of $[n]$ by {\em non-faces} of $\Gamma$,
i.e., subsets of $[n]$ which are not in $\Gamma$.

Without loss of generality, we may assume that $1 \in \supp(u)$.
Set
$$
\Lambda := \setb{(F, G) \in (2^{[n]} \setminus \Gamma) \times (2^{[n]} \setminus \Gamma)}{%
1 \in F, F \cap G = \emptyset, F \cup G = [n]}.
$$
Then $(\supp(u), \supp(v)) \in \Lambda$.
On the other hand, the pair $(U,V)$, where
$U := \setb{i \in [n]}{i \text{ is odd}}$ and $V := \setb{i \in [n]}{i \text{ is even}}$,
also belongs to $\Lambda$; indeed, $U$ and $V$ does not satisfy the condition in (3) of Proposition~\ref{thm:prop_cycl_poly}.
Thus what we have only to show to complete the proof is $\#\Lambda = 1$. 
Note that by \cite[Proposition 5.1]{L}, $\Gamma$ is combinatorially equivalent to
the join of the boundary complexes of two simplexes $\Gamma_1$, $\Gamma_2$.
Hence we may identify $\Gamma$ with $\partial\Gamma_1 * \partial\Gamma_2$ to prove $\#\Lambda = 1$,
and may assume $1 \in F_1$.
It is straightforward to verify that $\Lambda = \set{(F_1,F_2)}$.
\end{proof}

Now we will prove the following.

\begin{Theorem}\label{thm:n=d+2 with d > 1}
Assume $d \ge 2$ and $n = d+2$. Then $K[Q]$ is never normal.
\end{Theorem}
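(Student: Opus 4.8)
The plan is to invoke the Ohsugi--Hibi criterion (Lemma~\ref{thm:non-normality}). By Lemma~\ref{thm:n=d+2} the toric ideal $I_Q$ is principal, generated by a single binomial $u-v$ with $\supp(u)=\setb{i\in[n]}{i\text{ odd}}$ and $\supp(v)=\setb{i\in[n]}{i\text{ even}}$; this $u-v$ is then the unique (up to scalar) minimal generator of $I_Q$. Hence it suffices to prove that \emph{both} $u$ and $v$ are non-squarefree. Writing $u=\prod_{i\text{ odd}}x_i^{a_i}$ and $v=\prod_{j\text{ even}}x_j^{b_j}$ with all $a_i,b_j\ge 1$, the condition $u-v\in I_Q$ means $\sum_{i\text{ odd}}a_iv_i=\sum_{j\text{ even}}b_jv_j$ in $\ZZ^{d+1}$. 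Reading this off coordinate by coordinate (the $k$-th coordinate of $v_i$ being $\tau_i^k$) gives, for every polynomial $f\in\RR[x]$ with $\deg f\le d=n-2$,
\[
\sum_{i\text{ odd}}a_i\,f(\tau_i)=\sum_{j\text{ even}}b_j\,f(\tau_j).
\]

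First I would reduce the two non-squarefreeness statements to a single numerical one. Taking $f=1$ above shows $\deg u=\deg v=:D$, where $D=\sum_{i\text{ odd}}a_i=\sum_{j\text{ even}}b_j$. Since each exponent is at least $1$, a pigeonhole argument shows that $u$ (resp.\ $v$) is non-squarefree as soon as $D$ exceeds $\#\supp(u)=\lceil n/2\rceil$ (resp.\ $\#\supp(v)=\lfloor n/2\rfloor$). Thus it is enough to prove the single inequality $D>\lceil n/2\rceil$. As $D\ge\#\supp(u)=\lceil n/2\rceil$ holds automatically, the whole theorem comes down to ruling out the equality $D=\lceil n/2\rceil$, which would force $a_i=1$ for every odd $i$.

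The crux --- and the step I expect to be the main obstacle --- is to derive a contradiction from the assumption that all odd coefficients equal $1$. In that case the displayed identity says that the multiset $O:=\setb{\tau_i}{i\text{ odd}}$ (all multiplicities $1$, hence of size $r:=\lceil n/2\rceil$) and the multiset $E$ consisting of each $\tau_j$ ($j$ even) taken with multiplicity $b_j$ (of size $\sum_j b_j=r$) have equal power sums $\sum x^k$ for all $k=0,1,\dots,n-2$. Because $d\ge 2$ gives $n\ge 4$ and hence $r=\lceil n/2\rceil\le n-2$, there are enough power sums available to apply Newton's identities (we are in characteristic $0$ here, the $\tau_i$ being integers): equality of $p_0,\dots,p_r$ forces equality of all elementary symmetric functions $e_0,\dots,e_r$ of $O$ and $E$, whence $\prod_{\omega\in O}(x-\omega)=\prod_{\eta\in E}(x-\eta)$ and therefore $O=E$ as multisets. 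This is impossible: $O$ consists of $r$ \emph{distinct} values, while $E$ is supported on only $\lfloor n/2\rfloor$ values. If $n$ is odd this count is already $<r$; if $n$ is even it equals $r$, but then $\sum b_j=r$ with $r$ summands forces every $b_j=1$, so $E$ is the set of the even $\tau_j$, which is disjoint from $O$. Either way $O=E$ fails, giving the contradiction. Hence some odd $a_i\ge 2$, so $D>\lceil n/2\rceil$, both $u$ and $v$ are non-squarefree, and Lemma~\ref{thm:non-normality} yields that $K[Q]$ is not normal. The only delicate points to get right are the bookkeeping in this last parity split and the verification that $d\ge 2$ supplies exactly enough moments for Newton's identities.
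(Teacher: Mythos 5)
Your proof is correct, and it takes a genuinely different route from the paper's. Both arguments start from Lemma \ref{thm:n=d+2} and aim to apply the Ohsugi--Hibi criterion (Lemma \ref{thm:non-normality}), but the paper proceeds through the triangularized matrix of Lemma \ref{eg:delta} and a case split on the parity of $d$: for $d$ even, squarefreeness of one monomial forces both to be squarefree, and the last row of \eqref{eq:eq_matrix} gives $\tD_{d,d+1}=\tD_{d,n}$, which is absurd; for $d$ odd, counting rules out $v$ squarefree, the last row pins down $b_{n-1}=2$ and $\tD_{d,n}=2\tD_{d,d+1}$, and a telescoping rearrangement of the remaining relation (the decomposition $s_1+s_2=0$ with $s_1<0$ and $s_2\le 0$) produces the contradiction. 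You instead read the binomial relation as an equality of moments of two multisets of common size $D$ and show directly that $D>\lceil n/2\rceil$: if $D=\lceil n/2\rceil$, then all $a_i=1$, and since $\lceil n/2\rceil\le n-2$ (this is exactly where $d\ge 2$ enters), Newton's identities over $\QQ$ upgrade the equal power sums $p_1,\dots,p_{\lceil n/2\rceil}$ to an equality of multisets $O=E$, which fails because $E$ is supported on only $\lfloor n/2\rfloor$ values, with disjointness of the supports finishing the even case. Your argument is parity-uniform, shorter, and makes transparent why $d=1$ is a genuine exception ($\lceil n/2\rceil\le n-2$ fails at $n=3$, matching the rational normal curve, where $u=x_1x_3$ is squarefree); the paper's computation is more elementary, avoiding symmetric-function machinery, and yields explicit arithmetic by-products such as $\tD_{d,n}=2\tD_{d,d+1}$ along the way. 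The delicate points you flagged all check out: $D\ge\lceil n/2\rceil$ is automatic since every exponent is positive, the pigeonhole reduction of both non-squarefreeness claims to the single inequality $D>\lceil n/2\rceil$ is sound because $\lceil n/2\rceil\ge\lfloor n/2\rfloor$, and the characteristic-zero use of Newton's identities is legitimate since the relation $\sum_{i}a_iv_i=\sum_{j}b_jv_j$ lives in $\ZZ^{d+1}$ regardless of the field $K$.
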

\begin{proof}
Set $U := \setb{i \in [n]}{i \text{ is odd}}$ and $V := \setb{i \in [n]}{i \text{ is even}}$. By Lemma~\ref{thm:n=d+2},
$$
K[Q] \cong S / \left( \prod_{i \in U} x_i^{a_i} - \prod_{j \in V} x_j^{b_j} \right).
$$
Set $u = \prod_{i \in U} x_i^{a_i}$ and $v = \prod_{j \in V} x_j^{b_j}$.
By Lemma \ref{thm:non-normality}, it suffices to show that neither $u$ nor $v$ is squarefree.
Note that the following equality holds.
$$
\begin{pmatrix}
1 & 1 & \cdots & 1 \\
\tau_1 & \tau_2 & \cdots & \tau_n \\
\tau_1^2 & \tau_2^2 & \cdots & \tau_n^2 \\
\vdots & \vdots & \cdots & \vdots \\
\tau_1^d & \tau_2^d & \cdots & \tau_n^d
\end{pmatrix}
\begin{pmatrix}
a_1 \\
-b_2 \\
a_3 \\
-b_4 \\
\vdots
\end{pmatrix}
= \begin{pmatrix}
0 \\
0 \\
\vdots \\
0
\end{pmatrix}
\in \ZZ^{d+1}.
$$
By Lemma \ref{eg:delta},
\begin{equation}\label{eq:eq_matrix}
\begin{pmatrix}
1         & 1          & 1           & \cdots & 1                 & 1           & 1 \\
0         & \tD_{1,2} & \tD_{1,3} & \cdots & \tD_{1,d}   & \tD_{1,d+1}  & \tD_{1,n} \\
0         & 0          & \tD_{2,3}  & \cdots & \tD_{2,d}   & \tD_{2,d+1} & \tD_{2,n} \\
\vdots & \vdots   & \ddots    & \ddots &                   & \vdots    & \vdots \\
\vdots & \vdots   &              & \ddots & \ddots         &\vdots   & \tD_{d - 1,n} \\
0         & 0          & 0           & \cdots & 0                 & \tD_{d,d+1} & \tD_{d,n}
\end{pmatrix}
\begin{pmatrix}
a_1 \\
-b_2 \\
a_3 \\
-b_4 \\
\vdots
\end{pmatrix}
= \begin{pmatrix}
0 \\
0 \\
\vdots \\
0
\end{pmatrix}.
\end{equation}

For a proof by contradiction, suppose either $u$ or $v$ is squarefree. This is equivalent to say that
$\sum_{i \in U} a_i = \#U$ or $\sum_{j \in V} b_j = \#V$.
By the equation \eqref{eq:eq_matrix}, it follows that
\begin{equation}
\sum_{i \in U}a_i = \sum_{j \in V}b_j.
\end{equation}

{\bf The case $d$ is even.} Then $d = 2l$ for some positive integer $l$,
$n = 2l + 2$, $\#U = \#V = l+1$, which implies both of $u$ and $v$ are
squarefree. By the equation \eqref{eq:eq_matrix},
we have $\tD_{d,d+1} = \tD_{d,n}= 0$, while clearly $\tD_{d,n} > \tD_{d,d+1}$ holds,
a contradiction.

{\bf The case $d$ is odd.}  Then $d = 2l -1$ for some integer $l$ with $l > 1$,
$n = 2l + 1$ and $\#U =\#V + 1= l + 1$,
which implies that $v$ cannot be squarefree since $\sum_{i \in U}a_i \ge \#U$.
Thus $u$ is squarefree, that is, $a_i = 1$ for all $i \in U$.
Moreover one of the $b_j$ is $2$ and the others are $1$.
On the other hand, it follows from \eqref{eq:eq_matrix} that
$-\tD_{d, d+1}b_{2l} + \tD_{d,n}a_{2l+1} = 0$.
Since $\tD_{d,d+1} < \tD_{d,n}$, we conclude that $b_{2l} = 2$ and hence
$$
\tD_{d,n} = 2\tD_{d,d+1}.
$$
For simplicity, we set $c_i = a_i$ for odd $i$ and $c_i = -b_i$ for even $i$.
Hence $c_1 = c_3 = \cdots = c_n = 1$, $c_2 = c_4 = \cdots = c_{n-3} = -1$, and $c_{n-1} = -2$.
By the equation \eqref{eq:eq_matrix} again,
\begin{align*}
0 = \sum_{i=2}^n \tD_{1,i}c_i = \sum_{i=2}^n \left(\sum_{j=1}^{i-1}\Delta_{j,j+1}\right) c_i
  = \sum_{j=1}^{n-1} \Delta_{j,j+1} \left( \sum_{i = j+1}^n c_i \right)
\end{align*}
Since $n \ge 5$ by the hypothesis that $n$ is odd and $d \ge 2$,
we may divide the last summation in the above equality as follows.
Set
$$
s_1 := \sum_{j = 1}^{n-3} \left( \sum_{i = j+1}^n c_i \right) \Delta_{j,j+1},
$$
and
$$
s_2 = \Delta_{n-2,n-1}(c_{n-1} + c_n) + \Delta_{n-1,n}c_n = - \Delta_{d,d+1} + \Delta_{d+1,n}
$$
Then $s_1 + s_2 = \sum_{j=1}^{n-1} \Delta_{j,j+1} \left(\sum_{i = j+1}^n c_i \right) = 0$.
An easy observation shows that each coefficient $\sum_{i = j+1}^n c_i$
of $\Delta_{j,j+1}$ in $s_1$ is $0$ if $j$ is even and otherwise negative.
Hence the inequality $s_1 < 0$ follows since $n - 3 \ge 2$.
We will show that $s_2 \le 0$. If this is the case, then $s_1 + s_2 < 0$ holds on the contrary to the fact $s_1 + s_2 = 0$, which completes the proof.

Suppose $s_2 > 0$. Then
$$
\tau_n - \tau _{d+1} = \Delta_{d+1,n} > \Delta_{d,d+1} = \tau_{d+1} - \tau_d,
$$
and hence $\tau_n - \tau_d > 2(\tau_{n-1} - \tau_d)$.
It follows that
\begin{align*}
\tD_{d,n} &= (\tau_n - \tau_d)(\tau_n - \tau_{d-1}) \cdots (\tau_n - \tau_1) \\
            &> 2(\tau_{n-1} - \tau_d)(\tau_{n-1} - \tau_{d-1}) \cdots (\tau_{n-1} - \tau_1)
              = 2\tD_{d,n-1},
\end{align*}
which is absurd.
\end{proof}

As is stated above Lemma~\ref{thm:non-normality}, the $K$-algebra $K[Q]$ is normal if and only if
$\tau_2 - \tau_1 = \tau_3 - \tau_2 = \cdots = \tau_n - \tau_{n-1}$,
when $d = 1$.
Though we do not have a complete answer on the normality of $k[Q]$ when $n > d +2$,
we strongly believe the following holds.

\begin{Conjecture}
The $K$-algebra $K[Q]$ is normal only in one of the following cases: 
\begin{enumerate}
\item $n = d + 1$; 
\item $d = 1$ and $\tau_2 - \tau_1 = \tau_3- \tau_2 = \cdots = \tau_n - \tau_{n-1}$.
\end{enumerate}
\end{Conjecture}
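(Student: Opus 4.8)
The plan is to reduce the \emph{only if} direction of the conjecture to the single non-trivial case $d \ge 2$, $n \ge d+2$, and to attack that case by the mechanism already used for $n = d+2$. The case $d = 1$ is settled by the rational-normal-curve discussion preceding Lemma~\ref{thm:non-normality}, and $n = d+1$ gives a regular, hence normal, ring (via Proposition~\ref{thm:prop_cycl_poly}(1)); so what remains is to prove that $K[Q]$ is non-normal whenever $d \ge 2$ and $n \ge d+2$. Since Theorem~\ref{thm:n=d+2 with d > 1} disposes of $n = d+2$, the strategy for general $n$ is again to produce, through the Ohsugi--Hibi criterion (Lemma~\ref{thm:non-normality}), a minimal binomial generator of $I_Q$ both of whose monomials are non-squarefree.

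First I would describe the supply of candidate binomials. Since any $d+1$ of the vectors $v_1, \dots, v_n$ are linearly independent over $\RR$ (Proposition~\ref{thm:prop_cycl_poly}(1)), the circuits of the configuration are \emph{exactly} the $(d+2)$-element subsets $S \subseteq [n]$: any proper subset of such an $S$ is independent, so every $S$ is minimally dependent, and there are no smaller circuits. For each $S = \set{i_1 < \dots < i_{d+2}}$ the subconfiguration $\setb{v_i}{i \in S}$ is itself the vertex set of the cyclic polytope $C_d^*(\tau_{i_1}, \dots, \tau_{i_{d+2}})$, so Lemma~\ref{thm:n=d+2} applies and yields a circuit binomial $u_S - v_S \in I_Q$ whose supports are the odd- and even-position elements of $S$. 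Crucially, running the degree-and-parity computation from the proof of Theorem~\ref{thm:n=d+2 with d > 1} on this subconfiguration shows that, because $d \ge 2$, \emph{neither} $u_S$ \emph{nor} $v_S$ is squarefree. Thus $I_Q$ contains an abundant family of non-squarefree binomials, and the entire difficulty is to certify that at least one of them survives into a minimal generating set.

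To control minimality I would pass to indispensability. A binomial $u_S - v_S$ lies in \emph{every} minimal system of binomial generators of $I_Q$ as soon as the fiber $\setb{w \in \ZZ_{\ge 0}^n}{\sum_i w_i v_i = b_S}$, where $b_S \in \ZZ Q$ is the common image of $u_S$ and $v_S$, consists of exactly the two monomials $u_S$ and $v_S$; in that situation no lower-degree binomial can rewrite $u_S$ into $v_S$, so $u_S - v_S \notin \mathfrak{m} I_Q$, where $\mathfrak{m} = (x_1, \dots, x_n)$ is the graded maximal ideal. The plan is therefore to choose $S$ so as to minimize $b_S$ --- heuristically, $d+2$ consecutive indices with the smallest possible gaps $\Delta_{i,i+1}$ --- and to prove that for this choice the fiber is a doubleton. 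The combinatorics of faces of $\Gamma_d$, i.e.\ which subsets are faces according to their type (Proposition~\ref{thm:prop_cycl_poly}(3)), together with Gale's evenness, should constrain which monomials can occur in the fiber, since any further solution $w$ would exhibit a new dependence inside $S \cup \supp(w)$.

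The step I expect to be the genuine obstacle --- and the reason the statement is only conjectured --- is exactly this fiber control for $n \ge d+3$: once extra vertices are available, a competing monomial supported outside $S$ may reconnect $u_S$ and $v_S$ through lower-degree relations, so that the conveniently non-squarefree circuit fails to be indispensable and one is forced to hunt among non-circuit minimal generators, whose supports and non-squarefreeness are far harder to pin down. Equivalently, in the \emph{hole} formulation of non-normality one would like to lift a hole of the $(d+2)$-point subconfiguration $Q_d(\tau_{i_1}, \dots, \tau_{i_{d+2}})$ to a hole of $Q$, and the obstruction is precisely that the extra generators of $Q$ might fill that hole; showing that they cannot is the open point. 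A successful proof will hinge on a uniform, parameter-independent argument ruling out such filling, presumably by exploiting the strict monotonicity of the minors $\tD_{i,j}$ that already forced the contradiction in the $n = d+2$ case.
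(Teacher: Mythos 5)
You have not proved the statement, and neither does the paper: this is posed as an open \emph{Conjecture}, with the authors explicitly stating that they ``do not have a complete answer on the normality of $K[Q]$ when $n > d+2$.'' Your reductions are sound as far as they go --- the case $d=1$ is the rational normal curve discussion, $n=d+1$ gives a regular ring by Proposition~\ref{thm:prop_cycl_poly}(1), $n=d+2$ is Theorem~\ref{thm:n=d+2 with d > 1}, and your observation that every $(d+2)$-subset $S\subseteq[n]$ is a circuit whose binomial (by Lemma~\ref{thm:n=d+2} applied to the subconfiguration $C_d^*(\tau_{i_1},\dots,\tau_{i_{d+2}})$, together with the squarefreeness contradiction in the proof of Theorem~\ref{thm:n=d+2 with d > 1}) has both monomials non-squarefree is correct. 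But the decisive step --- choosing $S$ so that the fiber over $b_S$ is a doubleton, hence the circuit binomial is indispensable and Lemma~\ref{thm:non-normality} applies --- is asserted as a plan with no argument, and you candidly flag it yourself as the obstacle. That step is precisely the open content of the conjecture: for $n\ge d+3$ a monomial supported outside $S$ can in principle rewrite $u_S$ into $v_S$ through lower-degree relations, and nothing in the type/Gale combinatorics you invoke is shown to exclude this uniformly in the parameters $\tau_i$. Moreover, even if every circuit failed to be indispensable, non-normality could still hold via non-circuit minimal generators, so your method has no fallback; and since Lemma~\ref{thm:non-normality} is only a sufficient criterion, its failure would establish nothing either way.

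It is worth noting that the paper's own partial progress on $n\ge d+3$ uses a genuinely different mechanism from yours: Proposition~\ref{thm:GTR d+2} argues via Serre's condition $(R_1)$ (Proposition~\ref{prop:r1ex}) applied to the facet spanned by $v_1,\dots,v_d$, deducing from normality the divisibility $\tD_{d,d+1}\mid \tD_{d,s}$ for all $d+2\le s\le n$, so that any violation of this divisibility forces non-normality. This covers many parameter choices but not all --- for instance, equispaced parameters satisfy the divisibility for every $s$, so the conjecture remains open even for $\tau_i=i$ with $d\ge 2$, $n\ge d+3$. Your circuit-indispensability route and the paper's $(R_1)$-divisibility route are complementary attacks, but each leaves the same residual gap, and your proposal as written is a research program, not a proof.
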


The following proposition tells us that there are a lot of $K[Q]$
which are not normal when $n \ge d + 3$.

\begin{Proposition}\label{thm:GTR d+2}
Assume $n \ge d + 3$.
If $\tD_{d,d+1} \nmid \tD_{d,s}$ for some $s$ with $d+2 \le s \le n$, then $K[Q]$ is not normal.
\end{Proposition}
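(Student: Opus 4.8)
The plan is to exhibit a facet of the cone $\RR_{\geq 0}Q$ for which the second condition of Proposition \ref{prop:r1ex} fails, so that $K[Q]$ violates $(R_1)$ and hence, by Serre's Criterion, cannot be normal. The facet I would use is the one spanned by the first $d$ generators, $\Fc = \RR_{\geq 0}\setb{v_i}{1 \le i \le d}$. That this is genuinely a facet follows from Proposition \ref{thm:prop_cycl_poly}(3): the end set $\{1,\dots,d\}$ is of type $(d,0)$ and therefore a $(d-1)$-dimensional face of $\Gamma$, which by the correspondence recorded at the start of this section corresponds to a $d$-dimensional face of $\RR_{\geq 0}Q$. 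Reading off the matrix \eqref{eq:mat}, the columns $v_1,\dots,v_d$ all have vanishing last coordinate while $v_{d+1},\dots,v_n$ have last coordinates $\tD_{d,d+1},\dots,\tD_{d,n}$, all strictly positive; hence $\Fc$ lies in the hyperplane $\Hc = \{x_d = 0\}$ and the last-coordinate functional $\xb \mapsto x_d$ is a support form of $\Fc$ that is nonnegative on $Q$.

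Writing $\sigma$ for this support form, one has $\sigma(v_i)=0$ for $i \le d$ and $\sigma(v_i)=\tD_{d,i}$ for $i>d$. The next step is to record two facts. First, since $\tau_1<\cdots<\tau_n$, each factor $\tau_j-\tau_k$ grows with $j$, so $\tD_{d,d+1}<\tD_{d,d+2}<\cdots<\tD_{d,n}$; consequently the smallest positive value of $\sigma$ on the whole monoid $Q$ is exactly $\tD_{d,d+1}$, attained at $v_{d+1}$. Second, $\sigma(\ZZ Q)$ is the subgroup of $\ZZ$ generated by the $\sigma(v_i)$, that is $g\ZZ$ with $g := \gcd(\tD_{d,d+1},\dots,\tD_{d,n})$. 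Normalizing so that it is primitive on $\ZZ Q$, the resulting support form is $\widetilde\sigma := \sigma/g$ as a functional on $\ZZ Q$, so that $\widetilde\sigma(\ZZ Q)=\ZZ$ and the smallest positive value of $\widetilde\sigma$ on $Q$ equals $\tD_{d,d+1}/g$.

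I would then invoke the hypothesis. Since $g \mid \tD_{d,d+1}$ while $\tD_{d,d+1}\nmid\tD_{d,s}$ and $g\mid\tD_{d,s}$, we cannot have $g=\tD_{d,d+1}$; together with $g\mid\tD_{d,d+1}$ this forces $g<\tD_{d,d+1}$, whence $\tD_{d,d+1}/g \ge 2$. Thus every nonzero value of $\widetilde\sigma$ on $Q$ is at least $2$, so there is no $x \in Q$ with $\widetilde\sigma(x)=1$; condition (2) of Proposition \ref{prop:r1ex} fails for $\Fc$. Therefore $K[Q]$ does not satisfy $(R_1)$, and since a normal ring satisfies $(R_1)$, the ring $K[Q]$ is not normal.

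The routine points left to pin down are the verification that $\Fc$ is a facet and that $\xb \mapsto x_d$ is its (unnormalized) support form, both of which are immediate from \eqref{eq:mat} and the face characterization. The only genuinely substantive step is the one combining the monotonicity $\tD_{d,d+1}<\cdots<\tD_{d,n}$ with the gcd computation to identify $\tD_{d,d+1}/g$ as the minimal positive value of the normalized support form; this is where the divisibility hypothesis enters and is the crux of the argument, though it amounts to an elementary number-theoretic observation rather than a hard estimate.
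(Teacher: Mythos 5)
Your proof is correct and takes essentially the same route as the paper's: both single out the facet $\Fc = \RR_{\geq 0}\{v_1,\dots,v_d\}$, take the last coordinate of the matrix \eqref{eq:mat} normalized to be primitive on $\ZZ Q$ as the support form, and exploit the monotonicity $\tD_{d,d+1} < \cdots < \tD_{d,n}$ to show condition (2) of Proposition \ref{prop:r1ex} fails. The paper phrases this as a contradiction (an element $x \in Q$ with $\sigma_{\Fc}(x) = 1$ would force $\sigma_{\Fc}(v_{d+1}) = 1$ and hence $\tD_{d,d+1} \mid \tD_{d,s}$), whereas you make the normalizing constant $g = \gcd(\tD_{d,d+1},\dots,\tD_{d,n})$ explicit and argue directly that the minimal positive value $\tD_{d,d+1}/g$ is at least $2$ --- a cosmetic reorganization of the same argument.
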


\begin{proof}
Suppose $Q$ is normal.
Since the subset $\set{1, \dots ,d}$ of $[n]$ satisfies the condition in (3) of Proposition~\ref{thm:prop_cycl_poly},
the cone generated by $v_1, \dots ,v_d$ forms a facet of $\RR_{\ge 0} Q$.
Let $\Fc$ denote this facet.
Then $Q$ together with $\Fc$ satisfies the condition in Proposition~\ref{prop:r1ex}, and in particular,
there exists an element $x \in Q$ such that $\sigma_{\Fc}(x) = 1$,
where $\sigma_{\Fc}$ is  a support form of $\Fc$ with integer coefficients.

We will describe $\sigma_{\Fc}$ explicitly. Let $\Hc$ be the supporting hyperplane of $\Fc$.
Note that we can freely identify $Q$ with the affine semigroup associated with
the matrix in Lemma \ref{eg:delta}.
After this identification, the vector
$\ab_d = (0,\dots ,0,1) \in \ZZ^{d+1}$ defines $\Hc$ as is stated below of
Theorem \ref{Gor}. Thus
$$
\Hc = \setb{x \in \RR^{d+1}}{\langle \ab_d , x\rangle = 0},
$$
and $\langle \ab_d, x \rangle \in \ZZ_{> 0}$ for all $x \in Q \setminus \Fc$.
We set $\ZZ Q_\Hc := \ZZ Q / \ZZ Q \cap \Hc$.
Note that $\ZZ Q_\Hc \cong \ZZ$.
Let $v_0 \in \ZZ Q$ be an element whose image in $\ZZ Q_\Hc$
is a free basis of $\ZZ Q_\Hc$.
Then the support form $\sigma_{\Fc}$ of $Q$ and $\Fc$ is defined as
$$
\sigma_{\Fc}(x) = \frac{\langle \ab_d,x \rangle}{\langle \ab_d, v_0\rangle}
$$
for all $x \in \RR^{d+1}$, and $\sigma_{\Fc}(x) = 0$ for $x \in Q \cap \Fc$
and $\sigma_{\Fc}(x) \in \ZZ_{>0}$ for $x \in Q \setminus \Fc$ (see \cite[Remark 1.72 and p.55]{brunsgubel} for the construction and the property of a support form).
Recall that there exists an element $x \in Q$ such that $\sigma_{\Fc}(x) = 1$.
Since $\Fc$ is generated by $v_1,\dots ,v_d$,
the element $x$ can be written as $x = y + \sum_{i = d+1}^n \lambda_i v_i$ for some
$\lambda_i \in \ZZ_{\ge 0}$ and $y \in Q \cap \Fc$.
By definition, $\langle \ab_d, v_i \rangle = \tD_{d,i}$ for $i = d+1,\dots ,n$,
and $\langle \ab_d ,y \rangle = 0$.
Since $\tD_{d,d+1} < \tD_{d,d+2} < \cdots < \tD_{d,n}$,
it follows that $0 < \langle \ab_d ,v_{d+1} \rangle < \cdots < \langle \ab_d, v_n \rangle$,
and hence
$$
1 = \sigma_{\Fc}(x) \ge (\sum_{i = d+1}^n \lambda_i) \sigma_{\Fc}(v_{d+1}) > 0.
$$
Therefore $\sigma_{\Fc}(v_{d+1}) = 1$ and $x = y + v_{d+1}$.
Thus we can replace $v_0$ by $v_{d+1}$.
However it follows from the fact $v_s \in Q$ that
$$
\frac{\tD_{d,s}}{\tD_{d,d+1}} = \frac{\langle \ab_d, v_s \rangle}{\tD_{d,d+1}}
     = \sigma_{\Fc}(v_s) \in \ZZ,
$$
contrary to the hypothesis $\tD_{d,d+1} \nmid \tD_{d,s}$.
\end{proof}

Since there exists a lot of non-normal $K[Q]$, it is natural to ask when $K[Q]$ is Cohen-Macaulay.
Clearly if $n = d + 2$, then $K[Q]$ is a complete intersection,
and hence in particular, Cohen-Macaulay.
So far, we have never found an example of $K[Q]$ which is Cohen-Macaulay,
in the case $d \ge 2$ and $n > d + 2$. Thus we expect the following 
\begin{Conjecture}
The $K$-algebra $K[Q]$ is never Cohen-Macaulay if $d \ge 2$ and $n > d + 2$.
\end{Conjecture}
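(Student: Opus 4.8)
The plan is to translate the Cohen--Macaulay property of $K[Q]$ into a statement about the ``holes'' of the semigroup $Q$ and to show that these holes sit on the faces of $\RR_{\ge 0}Q$ in a way incompatible with Cohen--Macaulayness. Write $R := K[Q]$, a standard graded domain with $\dim R = d+1$, and let $\overline R$ be its normalization, i.e.\ the semigroup ring of $\ZZ Q \cap \RR_{\ge 0}Q$. By Hochster's Theorem $\overline R$ is Cohen--Macaulay, and since $R$ and $\overline R$ share the same field of fractions, the cokernel $C := \overline R / R$ is a finitely generated graded $R$-module that is torsion, so $\dim C \le d$. Applying $H^\bullet_{\mathfrak m}(-)$ to $0 \to R \to \overline R \to C \to 0$ and using $H^i_{\mathfrak m}(\overline R)=0$ for $i \le d$, I obtain isomorphisms $H^i_{\mathfrak m}(R) \cong H^{i-1}_{\mathfrak m}(C)$ for $i \le d$. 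Hence $R$ is Cohen--Macaulay if and only if either $C=0$ (the normal case) or $C$ is a Cohen--Macaulay module of dimension exactly $d$; equivalently, $R$ fails to be Cohen--Macaulay as soon as $\depth C \le d-1$.

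The first step is to confirm $C \ne 0$, which is automatic here: normality of $R$ would force Cohen--Macaulayness by Hochster, so proving the present statement in particular reproves non-normality in all cases, refining Proposition~\ref{thm:GTR d+2}. The heart of the argument is the second step, the analysis of $C$, whose support is the non-normal locus of $R$, a union of torus-orbit closures indexed by certain faces of $\RR_{\ge 0}Q$. Using the face description of $\Gamma_d(\tau_1,\ldots,\tau_n)$ in Proposition~\ref{thm:prop_cycl_poly} together with the triangular form \eqref{eq:mat} and the vectors $\bvec{S}$ of Proposition~\ref{prop:bvectors}, I would locate explicit holes. The computation in Proposition~\ref{thm:GTR d+2} already exhibits, along the facet $\Fc = \RR_{\ge 0}\{v_1,\dots,v_d\}$, a vertex $v_s$ whose support-form value $\tD_{d,s}/\tD_{d,d+1}\notin\ZZ$ is fractional; this produces holes on the codimension-one orbit of $\Fc$, hence an associated prime of $C$ of dimension $d$. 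The goal is to exhibit \emph{in addition} holes lying on a face of strictly larger codimension that are not absorbed into this $d$-dimensional component: such holes contribute an associated prime of $C$ of dimension $<d$, so that $C$ is not equidimensional and therefore not Cohen--Macaulay, forcing $H^{j}_{\mathfrak m}(R)\ne 0$ for some $j \le d$.

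To carry this out uniformly I would use the unimodular normalisations of Lemma~\ref{equiv} to fix $\tau_1=0$ and bring the gaps $\Delta_{i,i+1}$ to a convenient canonical shape, and then argue by increasing $n$: restricting to the subconfiguration $C^{*}_d(\tau_1,\tau_3,\tau_4,\ldots,\tau_n)$ (or to a facet) should let me inherit a high-codimension hole from a smaller case while the additional vertex $v_2$ creates or preserves the codimension-one component. A concrete alternative, useful at least for pinning down the pattern in small cases, is to compare Hilbert series: from $\operatorname{Hilb}(R)=\operatorname{Hilb}(\overline R)-\operatorname{Hilb}(C)$ one sees that the $h$-polynomial of $R$ equals the nonnegative $h$-polynomial of the normal ring $\overline R$ minus the hole correction, and any negative coefficient in the resulting $h$-vector of $R$ already contradicts Cohen--Macaulayness.

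The hard part will be the uniform control of the holes of $C$ across all admissible $(\tau_1,\dots,\tau_n)$ and all $n>d+2$: locating a genuinely lower-dimensional associated prime of $C$ — equivalently, a hole on a high-codimension face that escapes the facet component — is precisely the combinatorial obstruction that keeps the statement at the level of a conjecture. An induction on $n$ built on the restriction above, with base cases $n=d+3$ handled by the explicit $\tD$-inequalities in the spirit of Proposition~\ref{thm:GTR d+2}, seems to me the most promising route; but making the inductive step yield the required low-dimensional component, rather than merely another facet hole, is where a genuinely new idea appears to be needed.
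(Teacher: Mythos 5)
First, a point of calibration: the statement you were asked to prove is stated in the paper as a \emph{Conjecture}, with no proof offered --- the authors say explicitly that they have never found a Cohen--Macaulay example of $K[Q]$ with $d\ge 2$ and $n>d+2$, and nothing more. So there is no proof in the paper to compare yours against, and you should treat any complete argument as a new result rather than a reconstruction. Your reduction framework is correct as far as it goes: with $C=\overline R/R$ the long exact sequence in local cohomology, together with Hochster's theorem for the normalization $\overline R$, does give $H^{i}_{\mathfrak m}(R)\cong H^{i-1}_{\mathfrak m}(C)$ for $i\le d$, hence $R$ is Cohen--Macaulay if and only if $C=0$ or $C$ is a Cohen--Macaulay module of dimension exactly $d$; and the Hilbert-series remark is a legitimate check in examples, since $K[Q]$ is standard graded (all $v_i$ have first coordinate $1$), so a negative $h$-coefficient does rule out Cohen--Macaulayness.

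However, there are two genuine gaps, and the first is not the one you flagged. You assert that ``$C\ne 0$ is automatic,'' but non-normality of $K[Q]$ for \emph{all} parameters with $d\ge 2$, $n>d+2$ is itself only conjectured in the paper (its first Conjecture). Proposition~\ref{thm:GTR d+2} proves non-normality only under the hypothesis $\tD_{d,d+1}\nmid\tD_{d,s}$, and this hypothesis genuinely fails for some configurations: for equidistant parameters $\tau_i=i$ one has $\tD_{d,s}=\prod_{k=1}^{d}(s-k)=d!\binom{s-1}{d}$, which is divisible by $\tD_{d,d+1}=d!$ for every $s$, so that proposition says nothing there. Thus your ``first step'' is exactly as open as the conjecture itself, and any proof along your lines must begin by establishing non-normality unconditionally. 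The second gap is the one you honestly concede: you never produce the low-dimensional associated prime of $C$ (equivalently $\depth C\le d-1$) that the reduction requires. Note moreover that non-equidimensionality of $C$ is sufficient but not necessary for the failure --- an equidimensional $C$ of dimension $d$ can still have small depth --- so even the target of your hole-hunting is narrower than it needs to be; and the proposed induction on $n$ via the subconfiguration $C^{*}_d(\tau_1,\tau_3,\dots,\tau_n)$ faces the structural problem that the conductor and cokernel of the smaller semigroup ring do not map to those of $Q$ in any evident way (adding the generator $v_2$ can fill holes of the subsemigroup), so the inherited hole need not survive as an associated prime for the larger configuration. In short: a sound and standard reduction, but both of its pillars --- unconditional non-normality and uniform depth control of $C$ --- are missing, which is consistent with the statement remaining a conjecture in the paper.
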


\end{document}